\documentclass[11pt]{amsart}
\usepackage{geometry,ulem}
\usepackage{amscd,amssymb,verbatim,xcolor,mathrsfs}
\usepackage{longtable, multirow}
\usepackage{soul,cancel}
\usepackage{apptools}
\usepackage{xcolor}
\usepackage{tikz} 
\usepackage{longtable}
\usetikzlibrary {arrows.meta,bending,positioning}
\usepackage{dynkin-diagrams}
\synctex=1

\setcounter{MaxMatrixCols}{20}

\date{\today}

\newcommand\la{{\lambda}}

\newcommand\pmat{\begin{pmatrix}}
\newcommand\epmat{\end{pmatrix}}

\newcommand\bC{{\mathbb C}}

\newcommand\fra{{\mathfrak a}}
\newcommand\frb{{\mathfrak b}}

\newcommand\frg{{\mathfrak g}}
\newcommand\frh{{\mathfrak h}}
\newcommand\frk{{\mathfrak k}}

\newcommand\frp{{\mathfrak p}}
\newcommand\frq{{\mathfrak q}}

\newcommand\frt{{\mathfrak t}}

\newcommand\bbC{{\mathbb C}}

\newtheorem{theorem}{Theorem}[section]

\newtheorem{conjecture}[theorem]{Conjecture}
\newtheorem{definition}[theorem]{Definition}
\newtheorem{example}[theorem]{Example}
\newtheorem{lemma}[theorem]{Lemma}
\newtheorem{proposition}[theorem]{Proposition}

\begin{document}
\title[FPP conjecture for complex Lie groups]{Vogan's FPP conjecture for complex Lie groups}
\author{Chao-Ping Dong}
\author{Kayue Daniel Wong}

\address[Dong]{School of Mathematical Sciences, Soochow University, Suzhou 215006, P.R. China}
\email{chaopindong@163.com}

\address[Wong]{School of Science and Engineering, The Chinese University of Hong Kong, Shenzhen,
Guangdong 518172, P. R. China}
\email{kayue.wong@gmail.com}

\dedicatory{{Dedicated to Professor David Vogan on his $70^{th}$ birthday}}

\begin{abstract}
In this paper, we give a proof of Vogan's fundamental parallelepiped (FPP) conjecture for complex simple Lie groups, resulting in a reduction step in the classification of irreducible unitary representations
for these groups.
\end{abstract}

\maketitle
\setcounter{tocdepth}{1}

\section{Introduction}\label{sec:intro}

In representation theory of Lie groups, it is a long-standing problem to classify the unitary dual. Before the end of the twentieth century, Salamanca-Riba and Vogan proposed a conjecture aiming to reduce the classification problem to that of irreducible unitary representations containing unitarily small (\textit{u-small} for short) $K$-types in \cite{SV98}. This conjectural reduction is amazing since u-small $K$-types are those $K$-types whose highest weights belong to the u-small convex hull. In particular, they are finitely many. The recent paper \cite{PP24} of  Pand\v{z}i\'{c} and Prli\'{c} reports the long-term study of their team on the conjecture. It may serve as an evidence for the difficulty of the conjecture.

\smallskip
In 2023, Vogan sharpened their original conjecture in \cite{SV98} to the fundamental parallelepiped (\textit{FPP} for short) conjecture as follows:

\begin{conjecture} \label{conj-main}
Let $G$ be a real reductive Lie group in the Harish-Chandra class. Let $\pi$ be any fully supported irreducible $(\frg, K)$-module with real infinitesimal character $\Lambda$. If $\langle \Lambda, \beta^{\vee}\rangle >1$ for any simple root $\beta$, then $\pi$ is non-unitary.    
\end{conjecture}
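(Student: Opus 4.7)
The plan is to combine the complex case established in the body of this paper with Vogan's Langlands parametrization and Parthasarathy's Dirac inequality, via an induction on the real rank of $G$.

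\textbf{Reduction step.} By the Langlands classification, $\pi$ is realized as the Langlands quotient of a standard module $\Ind_{MAN}^G(\sigma \otimes e^\nu)$, where $P = MAN$ is a cuspidal parabolic, $\sigma$ is a limit of discrete series of $M$, and $\nu$ lies in the open positive chamber of $\fra^*$. The infinitesimal character of $\pi$ equals $\Lambda_\sigma + \nu$, and the hypothesis $\langle \Lambda, \beta^\vee\rangle > 1$ for all simple $\beta$ forces the continuous parameter $\nu$ to be large. The fully supported hypothesis (that $\Ann(\pi)$ has maximal associated variety) constrains the size of $M$: if $M$ were too large, $\pi$ would be annihilated by a non-trivial primitive ideal with strictly smaller associated variety. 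Ideally this forces $M$ to be a fundamental or minimal Levi, reducing us to a (near) principal series setting. For spherical $\pi$, the Barbasch--Vogan philosophy identifies the Hermitian form on the spherical $K$-type with that of the spherical representation of $G_{\bb C}$ of the same infinitesimal character, and the complex case established in this paper applies directly.

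\textbf{Dirac-inequality step.} For non-spherical $\pi$, I would apply the Parthasarathy--Dirac inequality: for every $K$-type $\mu$ occurring in $\pi \otimes S$ (with $S$ the spin module for $\frp$), one has $\|\mu + \rho_c\|^2 \geq \|\Lambda\|^2$. The hypothesis $\langle \Lambda, \beta^\vee\rangle > 1$ for all simple $\beta$ places $\Lambda$ strictly outside the fundamental parallelepiped, yielding a lower bound on $\|\Lambda\|^2$ larger than what any u-small $K$-type can supply on the left. Combined with Vogan's lowest $K$-type theory, which pins down the $K$-types of $\pi$ by small highest weights controlled by $\sigma$, the resulting reversed inequality contradicts unitarity. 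For non-fundamental Levi cases, one would first descend via cohomological induction from the Levi $L$ (using that $\CL_\frq(\sigma)$ preserves unitarity under good positivity of $\sigma$), invoking the theorem for $L$ as an inductive hypothesis.

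\textbf{Main obstacle.} The principal difficulty is controlling the fully supported hypothesis across general real forms. The correspondence between full support and minimality of the Levi $M$, clean in the complex case (where $M$ is abelian), breaks down in general due to Arthur-unipotent representations attached to rigid nilpotent orbits. Pinning down which $K$-types must occur in such representations --- a prerequisite for the Dirac-inequality step --- requires extending the signature-character computations of Vogan and Barbasch--Vogan beyond the complex case. Resolving this uniformly, rather than by a case-by-case check on simple real forms, would likely be the deepest part of any proof, with particular care needed in the exceptional cases where the Parthasarathy inequality becomes degenerate and in real forms for which $\rho_c \neq \rho$ introduces additional gaps between the Dirac and the FPP bounds.
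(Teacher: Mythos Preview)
Your proposal aims at the general real case, but the paper does not prove Conjecture~\ref{conj-main} in that generality: it establishes only the complex simple case, via the sharpened Conjecture~\ref{conj-FPP}. So there is no paper proof of the full statement to compare against; what can be compared is the complex case, and there your outline diverges substantially from what the paper actually does.

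For complex groups the paper never invokes the Dirac inequality. Instead it (i) attaches to the lowest $K$-type $V_\frg(\eta)$ a Levi $M_{cx}\supset M_f$ for which the inducing module has only cx-basic simple factors, (ii) shows (Theorem~\ref{thm-reduce}, partly by computer) that at least one simple factor $\pi_H$ again satisfies the FPP hypothesis, and (iii) uses the spherical results of \cite{BC05}, \cite{BC09} (and Lemmas~\ref{lem-b}, \ref{lem-c} for the few non-spherical basic cases) to find indefinite signature on a level-$\frp$ $(H\cap K)$-type which is then shown to be bottom layer, so that Theorem~\ref{thm-SV} transports the indefiniteness to $\pi$. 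This is a signature computation on specific $K$-types, not a norm inequality.

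Your Dirac-inequality step has a genuine gap. Parthasarathy's inequality $\|\mu+\rho_c\|^2\ge\|\Lambda\|^2$ constrains how small the $K$-types of a unitary $\pi$ can be; it does not by itself produce a contradiction from $\langle\Lambda,\beta^\vee\rangle>1$. A fully supported module can have lowest $K$-type large enough that the inequality is satisfied even when $\Lambda$ lies well outside the fundamental parallelepiped---indeed, the gap between the Dirac bound and the FPP bound is exactly the subject of \cite{PP24}, cited in the introduction as evidence that this route is hard. Your reduction step is also not sound as stated: full support does \emph{not} force the Langlands Levi $M$ to be minimal (already for complex groups the paper must handle non-spherical basic cases in types $B$, $C$, $F_4$, $G_2$), and the Barbasch--Vogan transfer of spherical signatures to $G_\bC$ is a deep result available only for split groups via petite $K$-types \cite{B04}, not a general mechanism. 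You correctly identify these issues in your ``main obstacle'' paragraph, but that paragraph is effectively an admission that the proposal is a strategy rather than a proof.
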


Here a $(\mathfrak{g},K)$-module $\pi$ is fully supported means that $\pi$ cannot be realized as a cohomologically induced module from any proper $\theta$-stable parabolic subalgebra of $\mathfrak{g}$ in the weakly good range. Under this perspective, along with the fact (from \cite{KV95} for instance) that cohomological induction in weakly good range (if non-zero) preserves unitarity and non-unitarity, the above conjecture gives a very effective reduction on the classification of the unitary dual $\widehat{G}$. Namely, one can only focus on the classification problem on representations with `small' infinitesimal characters. We also remark that a discrete version of Vogan's FPP conjecture appeared in Conjecture 1.4 of \cite{DDH22}, which is originated from the study of Dirac series. 

The FPP conjecture is proved for $G = U(p,q)$ in by the second named author in \cite{W24}. 
In this paper, we will present a proof of FPP conjecture for all complex simple Lie groups. Although the unitary dual is known for classical groups and $G_2$ (\cite{D79}, \cite{V86}, \cite{B89}, \cite{V94}), we present the proofs of these cases for completeness. Moreover, this can serve as a demonstration of certain methods employed in \cite{B89} for classical groups, and will shed some light on proving the conjecture for all real reductive groups in general.

\section{Preliminaries}

From now on, if not stated otherwise, we let $G$ be a connected complex simple Lie group and  view it as a real Lie group.

\subsection{Complex groups} \label{sec-cx}
Fix a Cartan subgroup $H$ of $G$ and a Cartan
involution $\theta$ of $G$. Write $K:=G^{\theta}$, which is a maximal
compact subgroup of $G$. Denote by $\frg_0=\frk_0\oplus\frp_0$ the corresponding Cartan
decomposition of the Lie algebra $\frg_0 ={\rm Lie} (G)$, and $\frh_0 = \frt_0 \oplus \fra_0$, where $\mathfrak{h}_0={\rm Lie} (H)$. We remove the subscript from a Lie algebra to denote its 
complexification. The following identifications are adopted throughout this paper:
\begin{equation}\label{identifications}
\frg\cong \frg_{0} \oplus
\frg_0, \quad
\frh\cong \frh_{0} \oplus
\frh_0, \quad \frt\cong \{(x,-x) : x\in
\frh_{0} \}, \quad \fra \cong\{(x, x) : x\in
\frh_{0} \}.
\end{equation}

Fix a Borel subgroup $B$ of $G$
containing $H$. Put $\Delta^{+}(\frg_0, \frh_0)=\Delta(\frb_0, \frh_0)
$ and let $W$ be the Weyl group $W = W(\frg_0, \frh_0)$. 
We may and we will identify a $K$-type $V_{\frg}(\eta)$ with its highest weight $\eta \in \frt^* \cong \mathfrak{h}_0^*$.

Let $(\lambda_{L}, \lambda_{R})\in \frh_0^{*}\times
\frh_0^{*}$ be such that $\lambda_{L}-\lambda_{R}$ is
a weight of a finite dimensional holomorphic representation of $G$.
 We view $(\lambda_L, \lambda_R)$ as a real-linear functional on $\frh$ by \eqref{identifications}, and write $\bbC_{(\lambda_L, \lambda_R)}$ as the character of $H$ with differential $(\lambda_L, \lambda_R)$. By \eqref{identifications} again, we have
$$
\bbC_{(\lambda_L, \lambda_R)}|_{T}= \bbC_{\mu}:=\bbC_{\lambda_L-\lambda_R}, \quad \bbC_{(\lambda_L, \lambda_R)}|_{A}=\bbC_{\nu}:=\bbC_{\lambda_L+\lambda_R}.
$$
Extend $\bbC_{(\lambda_L, \lambda_R)}$ to a character of $B$, and put 
$$X(\lambda_{L}, \lambda_{R}):=K\mbox{-finite part of Ind}_{B}^{G}(\bbC_{\mu} \otimes \bbC_{\nu} \otimes {\bf 1}),$$
where the parabolic induction is the normalized so that its infinitesimal character is equal to the $W\times W$ orbit of $(\lambda_L, \lambda_R)$. Using Frobenius reciprocity, one sees that the $K$-type with extremal weight $\mu$ occurs with multiplicity one in $X(\lambda_{L}, \lambda_{R})$. Let $\{\xi\}$ be the unique dominant weight to which $\xi$ is conjugate under the action of $W$, and let $J(\lambda_L,\lambda_R)$ be the unique subquotient of $X(\lambda_{L}, \lambda_{R})$ containing the $K$-type $V_{\frg}(\{\mu\})$.

\begin{theorem}\label{thm-Zh} {\rm (Zhelobenko \cite{Zh74})}
In the above setting, we have that
\begin{itemize}
\item[a)] Every irreducible admissible ($\frg$, $K$)-module is of the form $J(\lambda_L,\lambda_R)$.
\item[b)] Two such modules $J(\lambda_L,\lambda_R)$ and
$J(\lambda_L^{\prime},\lambda_R^{\prime})$ are equivalent if and
only if there exists $w\in W$ such that
$w\lambda_L=\lambda_L^{\prime}$ and $w\lambda_R=\lambda_R^{\prime}$.
\item[c)] $J(\lambda_L, \lambda_R)$ admits a non-degenerate Hermitian form if and only if there exists
$w\in W$ such that $w\mu =\mu , w\nu = -\overline{\nu}$.
\end{itemize}
\end{theorem}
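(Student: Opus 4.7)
The plan is to exploit the particularly simple parabolic geometry of a complex group $G$ viewed as a real group: since $B$ is, up to conjugacy, the only parabolic subgroup, the Casselman subrepresentation theorem guarantees that every irreducible admissible $(\frg,K)$-module embeds in some principal series $X(\lambda_L,\lambda_R)$. The classification then reduces to identifying the correct subquotient via a distinguished multiplicity-one $K$-type, and the analysis is greatly simplified by the fact that $B\cap K = T$.

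For part (a), the key input is the $K$-spectrum of $X(\lambda_L,\lambda_R)$. Since $\mathrm{Ind}_B^G(\bbC_\mu\otimes\bbC_\nu\otimes{\bf 1})|_K \cong \mathrm{Ind}_T^K \bbC_\mu$, Frobenius reciprocity gives
\[
[X(\lambda_L,\lambda_R):V_\frg(\eta)] = \dim V_\frg(\eta)_\mu,
\]
so $V_\frg(\{\mu\})$ appears with multiplicity one (as $\mu$ is extremal there). Consequently a unique composition factor $J(\lambda_L,\lambda_R)$ contains $V_\frg(\{\mu\})$. Conversely, given an arbitrary irreducible $\pi$, Casselman's theorem embeds $\pi$ into some principal series; choosing the representative of the infinitesimal character so that $\{\mu\}$ is the distinguished $K$-type of $\pi$ then identifies $\pi$ with $J(\lambda_L,\lambda_R)$.

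Part (b) is then largely formal: the backward direction is immediate from the $W\times W$-invariance of the induction construction. For the forward direction, an equivalence must preserve both the infinitesimal character $(\lambda_L,\lambda_R)$ modulo $W\times W$ and the distinguished $K$-type $\{\mu\}$ modulo $W$; combining these constraints forces the two Weyl group elements to coincide, giving $(w,w)\cdot(\lambda_L,\lambda_R)=(\lambda_L',\lambda_R')$. For part (c), a direct computation shows that the Hermitian dual of $J(\lambda_L,\lambda_R)$ is $J(-\overline{\lambda_R},-\overline{\lambda_L})$, so the existence of an invariant Hermitian form is equivalent to $J(\lambda_L,\lambda_R) \cong J(-\overline{\lambda_R},-\overline{\lambda_L})$, which by (b) translates into the existence of $w\in W$ with $w\lambda_L=-\overline{\lambda_R}$ and $w\lambda_R=-\overline{\lambda_L}$. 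Subtracting and adding these identities (and using that $\overline{\mu}=\mu$, since $\mu$ is integral) yield precisely $w\mu=\mu$ and $w\nu=-\overline{\nu}$.

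The main obstacle lies in the identification step of (a): showing that an abstract irreducible $\pi$ coincides with a specific composition factor $J(\lambda_L,\lambda_R)$ requires selecting the correct Weyl-conjugate of the infinitesimal character so that the distinguished $K$-type of $\pi$ matches $\{\mu\}$, and in the singular cases some work is needed to rule out that the composition factor containing the distinguished $K$-type is itself further reducible. Once (a) is established, (b) and (c) follow from essentially formal manipulations.
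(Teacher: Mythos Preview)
The paper does not supply a proof of this theorem at all: it is stated as a classical result and attributed to Zhelobenko \cite{Zh74}, with no argument given in the body of the paper. There is therefore nothing in the paper to compare your proposal against.

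That said, your outline is a reasonable sketch of the standard argument, and parts (a) and (c) are essentially correct as written. The one place where you are glossing over a genuine difficulty is in the forward direction of (b). Knowing that the infinitesimal characters agree gives you $(w_1,w_2)\in W\times W$ with $w_1\lambda_L=\lambda_L'$ and $w_2\lambda_R=\lambda_R'$, and matching the distinguished $K$-types tells you only that $w_1\lambda_L-w_2\lambda_R$ is $W$-conjugate to $\lambda_L-\lambda_R$. This by itself does \emph{not} force $w_1=w_2$; one needs an additional argument (for instance, a careful analysis of the intertwining operators between the various $X(w_1\lambda_L,w_2\lambda_R)$, or the PRV-type results on the structure of the Harish-Chandra module) to conclude that the unique subquotient containing $V_\frg(\{\mu\})$ is the same in both principal series only under the diagonal action. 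Your phrase ``combining these constraints forces the two Weyl group elements to coincide'' hides exactly the content of Zhelobenko's theorem and would need to be unpacked in a full proof.
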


In this manuscript, we only consider irreducible Hermitian $(\frg,K)$-modules with real infinitesimal characters. In such a case, Theorem \ref{thm-Zh}(b)-(c) imply that one can only consider the irreducible modules 
\begin{equation} \label{eq-hermirr}
J(\lambda, -w\lambda) := J\left(\frac{1}{2}\eta + \nu, -\frac{1}{2}\eta + \nu\right)
\end{equation}
where $\eta$ is a dominant weight, and $w \in W$ is chosen such that $w\eta = \eta$, $w\nu = -\nu$. 
In particular, its lowest $K$-type is $V_{\frg}(\eta)$ with infinitesimal character equal to (the $W \times W$-orbit) of  $(\lambda,-\lambda)$.

\subsection{Bottom Layer $K$-types} \label{sec-bottom}
For the rest of the manuscript, we fix the following notations: Let $\pi = J(\lambda, -w\lambda)$ be an irreducible Hermitian module with real infinitesimal character as in Equation \eqref{eq-hermirr}, whose lowest $K$-type is $V_{\mathfrak{g}}(\eta)$ with
\begin{equation} \label{eq-etal}
\eta = [k_1, k_2, \dots, k_n] := k_1 \varpi_1 + k_2 \varpi_2 + \dots k_n \varpi_n,
\end{equation}
where $\varpi_1, \dots, \varpi_n$ are the fundamental weights of $K$. Let $M_f$ be the Levi subgroup of $G$ determined by the nodes 
\begin{equation} \label{eq-m}
I(M_f) := \{i\ |\ k_i = \langle \eta,\beta_i^{\vee} \rangle = 0\} \quad \quad (\beta_i\ \textrm{simple root}).
\end{equation}
We call $M_f$ the {\bf Levi subgroup corresponding to $\pi$}. More generally, for any Levi subgroup $M$ of $G$, we will specify $M$ by a subset of the nodes in the Dynkin diagram of $G$.

\smallskip
For any Levi subgroup $M$ containing $M_f$, $\eta$ defines a dominant weight of both $M$ and $G$ (for instance, if $M = M_f$, then $V_{\mathfrak{m}_f}(\eta)$ is the trivial $M_f \cap K$-type tensored with a character in the center of $M_f$). Then there exists a unique irreducible $(\mathfrak{m}, M\cap K)$-module $\pi_{M}$ with lowest $(M \cap K)$-type $V_{\mathfrak{m}}(\eta)$ such that the induced module
\begin{equation} \label{eq-bl}
\mathrm{Ind}_{MN}^G\left(\pi_{M} \otimes {\bf 1} \right)
\end{equation}
has $\pi$ being its lowest $K$-type
subquotient. 

\smallskip
The following definition is essential in describing the relationships between the (non)-unitarity of $\pi_{M}$ and $\pi$.

\begin{definition} \label{def-bottomlayer}
Let $\pi$ be an irreducible, Hermitian $(\frg,K)$-module with $M_{\pi}$ being the Levi subgroup attached to $\pi$. For any $M \supset M_f$, consider the induced module \eqref{eq-bl} corresponding to $\pi$. We say a $(M \cap K)$-type $V_{\mathfrak{m}}(\gamma)$ appearing in $\pi_{M}$ {\bf bottom layer} if $\gamma$ also defines a dominant weight of $K$. 
\end{definition}

For instance, if the lowest $K$-type of $\pi$ is $V_{\mathfrak{g}}(\eta)$, then for any $M \supset M_f$, the lowest $(M \cap K)$-type $V_{\mathfrak{m}}(\eta)$ in $\pi_{M}$ is always bottom layer. 

\smallskip
The importance of bottom layer $K$-types on the unitarity of $\pi$ is given by the following:
\begin{theorem}[Speh-Vogan] \label{thm-SV}
Retain the notations in Definition \ref{def-bottomlayer}. Suppose the $(M \cap K)$-type $V_{\mathfrak{m}}(\gamma)$ of $\pi_M$ 
is bottom layer, then the multiplicities and signatures of $V_{\mathfrak{g}}(\gamma)$ in $\pi$ and $V_{\mathfrak{m}}(\gamma)$ in $\pi_{M}$ are equal.
\end{theorem}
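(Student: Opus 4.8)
The statement to prove is the Speh--Vogan theorem (Theorem \ref{thm-SV}): that bottom layer $(M\cap K)$-types transfer with equal multiplicity and signature to $K$-types of the induced module. Let me sketch a proof plan.

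\bigskip

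The plan is to prove this via the \emph{bottom layer map} construction of Speh and Vogan, adapted to the complex group setting where $K$ itself is a connected complex reductive group. First I would recall the setup: we have $\pi = \mathrm{Ind}_{MN}^G(\pi_M \otimes {\bf 1})$ with a $G$-invariant (in fact $K$-invariant) Hermitian form obtained by pushing forward the Hermitian form on $\pi_M$ through the induction functor. The key is to decompose the restriction to $K$ using the fact that $\mathrm{Ind}_{MN}^G(\pi_M)|_K \cong \mathrm{Ind}_{M\cap K}^K(\pi_M|_{M\cap K})$ (Frobenius/Mackey for compact forms, or rather the analogous statement for our complex $K$), so that understanding multiplicities of $K$-types reduces to a purely compact (algebraic) branching problem between $K$ and its Levi $M\cap K$.

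\bigskip

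The heart of the argument is the \textbf{bottom layer map}. For a dominant $K$-weight $\gamma$ that is also dominant for $M\cap K$, I would construct an explicit $K$-equivariant inclusion
\[
V_{\mathfrak g}(\gamma) \hookrightarrow \mathrm{Ind}_{M\cap K}^K\big(V_{\mathfrak m}(\gamma)\big)
\]
coming from Frobenius reciprocity and the fact that $V_{\mathfrak g}(\gamma)$, restricted to $M\cap K$, contains $V_{\mathfrak m}(\gamma)$ with multiplicity one as its ``lowest'' constituent (this is where the hypothesis that $\gamma$ is $G$-dominant is used — it guarantees both that $V_{\mathfrak g}(\gamma)$ exists and that no ``lower'' $M\cap K$-type of $V_{\mathfrak g}(\gamma)$ can interfere). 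Dually there is a projection $\mathrm{Ind}_{M\cap K}^K(V_{\mathfrak m}(\gamma)) \twoheadrightarrow V_{\mathfrak g}(\gamma)$. Composing inclusion and projection with the maps $\pi_M \hookrightarrow \mathrm{Ind}$ and the Hermitian-form-induced pairings, one shows that the $V_{\mathfrak g}(\gamma)$-isotypic component of $\pi$ is identified, \emph{as a Hermitian space}, with the $V_{\mathfrak m}(\gamma)$-isotypic component of $\pi_M$. The multiplicity statement follows from the fact that no other $M\cap K$-type $\delta$ of $\pi_M$ can contribute a copy of $V_{\mathfrak g}(\gamma)$ to $\mathrm{Ind}_{M\cap K}^K(\delta)$ at the ``bottom'' — by a weight/positivity argument, any $\delta$ contributing $V_{\mathfrak g}(\gamma)$ would have to satisfy $\delta \leq \gamma$ in the appropriate order, and the only such $\delta$ appearing in $\pi_M$ with a bottom-layer $K$-type at level $\gamma$ is $\delta = \gamma$ itself.

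\bigskip

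The signature statement is then proved by checking that the identification of Hermitian spaces above is an \emph{isometry} up to a positive scalar: the induced form on $\mathrm{Ind}_{M\cap K}^K(V_{\mathfrak m}(\gamma))$ restricts to a positive multiple of the invariant form on the embedded copy of $V_{\mathfrak g}(\gamma)$ (because the relevant integral over $K/(M\cap K)$, or its algebraic analogue, of a nonnegative function of the matrix coefficients is strictly positive), and similarly on the $\pi_M$ side the bottom-layer form is a positive multiple of the abstract form on the $V_{\mathfrak m}(\gamma)$-isotypic space. Chasing these scalars through the commuting diagram shows the signatures of $V_{\mathfrak g}(\gamma)$ in $\pi$ and of $V_{\mathfrak m}(\gamma)$ in $\pi_M$ agree.

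\bigskip

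The main obstacle, and the step requiring the most care, is controlling which $(M\cap K)$-types $\delta$ of $\pi_M$ can ``leak'' into the $V_{\mathfrak g}(\gamma)$-isotypic component of the induced module: one must show that the bottom-layer condition pins down a unique contributor and that the contribution is exactly the naive one. In the general Speh--Vogan framework this is handled by a careful analysis of the filtration on $\pi_M|_{M\cap K}$ by ``distance from the bottom'' and the compatibility of this filtration with induction; in our complex setting, since $K$ is connected reductive, this becomes the statement that the multiplicity of $V_{\mathfrak g}(\gamma)$ in $\mathrm{Ind}_{M\cap K}^K(V_{\mathfrak m}(\delta))$ vanishes unless $\delta - \gamma$ is a sum of positive roots of $\mathfrak k$ orthogonal to the walls defining $M\cap K$, which forces $\delta = \gamma$ among the constituents appearing at the bottom of $\pi_M$. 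Since this is a known theorem, I would cite Speh--Vogan and Vogan's book for the branching-law and bottom-layer-map machinery, and restrict the written proof to indicating how the complex-group specifics (the identification $\mathrm{Ind}|_K = \mathrm{Ind}_{M\cap K}^K$ and the explicit $K$-type combinatorics via highest weights) make each step concrete.
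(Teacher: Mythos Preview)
The paper does not prove Theorem~\ref{thm-SV}: it is quoted as a known result of Speh and Vogan and used as a black box, with no argument supplied. So there is no ``paper's own proof'' to compare your proposal against; the appropriate treatment in this manuscript is simply a citation.

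That said, your sketch follows the spirit of the standard bottom-layer-map argument but has a genuine gap in the multiplicity step. You claim that no $(M\cap K)$-type $\delta \neq \gamma$ of $\pi_M$ can contribute a copy of $V_{\mathfrak g}(\gamma)$ to $\mathrm{Ind}_{M\cap K}^K(V_{\mathfrak m}(\delta))$, arguing via a weight inequality. But Frobenius reciprocity gives
\[
[V_{\mathfrak g}(\gamma) : \mathrm{Ind}_{M\cap K}^K(V_{\mathfrak m}(\delta))] = [V_{\mathfrak m}(\delta) : V_{\mathfrak g}(\gamma)|_{M\cap K}],
\]
and this is nonzero for many $\delta$ with $\gamma - \delta$ a nonzero sum of positive roots outside $M$; such $\delta$ (for instance $\delta = \eta$ plus roots of $M$, with $\gamma - \delta$ a single root outside $M$) certainly occur in $\pi_M$. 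Thus in general $[V_{\mathfrak g}(\gamma) : \mathrm{Ind}_{MN}^G(\pi_M)] > [V_{\mathfrak m}(\gamma) : \pi_M]$, and your branching argument, which is really about the full induced module, cannot by itself yield the equality for the Langlands subquotient $\pi$. The nontrivial content of the theorem is precisely that the excess copies of $V_{\mathfrak g}(\gamma)$ lie in the \emph{other} composition factors of the induced module; equivalently, that the bottom-layer map lands isomorphically in the $V_{\mathfrak g}(\gamma)$-isotypic piece of $\pi$ itself. Establishing this requires intertwining the bottom-layer map with the Langlands quotient construction (as in the Speh--Vogan or Knapp--Vogan \cite{KV95} machinery), a step your outline does not supply. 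Your last paragraph also has the inequality reversed (it is $\gamma - \delta$, not $\delta - \gamma$, that must be a sum of positive roots), and the conclusion ``forces $\delta = \gamma$'' is the same unjustified claim restated.
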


\begin{example} \label{eg-bottom}
We will apply Theorem \ref{thm-SV} extensively for $M = M_f$ (so that $V_{\mathfrak{m}}(\eta)$ is a one-dimensional $(M \cap K)$-type by the above discussions) and consider 
$$\gamma = \eta + \delta,$$ 
where $\delta$ the highest weight of an adjoint $(M \cap K)$-type of $\pi_{M}$. We will present an example to see how one can check whether $\gamma$ is bottom layer, so that one can apply Theorem \ref{thm-SV} to check the (non)-unitarity of $\pi$ by that of $\pi_M$.

\medskip
Firstly, we list the highest weights $\delta$ in terms of sum of simple roots for all simple Lie groups. We will use the same numbering of nodes of the Dynkin diagrams for the rest of the manuscript.
\begin{center}
\begin{longtable}{|c|c|c|c|}
\caption{Highest weights of adjoint representations}
\label{table-adjoint}
\\
\hline
Type & Dynkin diagram & Highest weight $\delta$ of adjoint representation \\
\hline \hline
$A_p$ & 
\begin{tikzpicture}
\dynkin[labels={\beta_1,\beta_2,\beta_{p-1},\beta_p},text style/.style={scale=1},edge length=1cm]A{}
\end{tikzpicture} &
$\beta_1 + \beta_2 + \dots + \beta_{p-1} + \beta_p$\\

\hline
$B_p$ & 
\begin{tikzpicture}
\dynkin[labels={\beta_1,\beta_2,\beta_{p-2},\beta_{p-1},\beta_p},text style/.style={scale=1},edge length=1cm]B{}
\end{tikzpicture} &
$\beta_1 + 2\beta_2 + \dots + 2\beta_{p-1} + 2\beta_p$\\

\hline
$C_p$ & 
\begin{tikzpicture}
\dynkin[labels={\beta_1,\beta_2,\beta_{p-2},\beta_{p-1},\beta_p},text style/.style={scale=1},edge length=1cm]C{}
\end{tikzpicture} &
$2\beta_1 + 2\beta_2 + \dots + 2\beta_{p-1} + \beta_p$\\

\hline
$D_p$ & 
\begin{tikzpicture}
\dynkin[labels={\beta_1,\beta_2,\beta_{p-3},\beta_{p-2},\beta_{p-1},\beta_p},text style/.style={scale=1},edge length=1cm,label directions={,,,right,,}]D{}
\end{tikzpicture} &
$\beta_1 + 2\beta_2 + \dots + 2\beta_{p-2} + \beta_{p-1} + \beta_p$\\

\hline
$E_6$ & 
\begin{tikzpicture}
\draw
    (0,0) node[circle,fill=black,inner sep=0pt,minimum size=3pt,label=below:{$\beta_1$}] {}
 -- (1,0) node[circle,fill=black,inner sep=0pt,minimum size=3pt,label=below:{$\beta_3$}] {}
--   (2,0) node[circle,fill=black,inner sep=0pt,minimum size=3pt,label=below:{$\beta_4$}] {}
 -- (3,0) node[circle,fill=black,inner sep=0pt,minimum size=3pt,label=below:{$\beta_5$}] {}
-- (4,0) node[circle,fill=black,inner sep=0pt,minimum size=3pt,label=below:{$\beta_6$}] {};

\draw
 (2,0.7) node[circle,fill=black,inner sep=0pt,minimum size=3pt,label=above:{$\beta_2$}] {}
--   (2,0) node {};
\end{tikzpicture} &
$\beta_1 + 2\beta_2 + 2\beta_3 + 3\beta_4 + 2\beta_5 + \beta_6$\\

\hline
$E_7$ & 
\begin{tikzpicture}
\draw
    (0,0) node[circle,fill=black,inner sep=0pt,minimum size=3pt,label=below:{$\beta_1$}] {}
 -- (1,0) node[circle,fill=black,inner sep=0pt,minimum size=3pt,label=below:{$\beta_3$}] {}
--   (2,0) node[circle,fill=black,inner sep=0pt,minimum size=3pt,label=below:{$\beta_4$}] {}
 -- (3,0) node[circle,fill=black,inner sep=0pt,minimum size=3pt,label=below:{$\beta_5$}] {}
-- (4,0) node[circle,fill=black,inner sep=0pt,minimum size=3pt,label=below:{$\beta_6$}] {}
-- (5,0) node[circle,fill=black,inner sep=0pt,minimum size=3pt,label=below:{$\beta_7$}] {};

\draw
 (2,0.7) node[circle,fill=black,inner sep=0pt,minimum size=3pt,label=above:{$\beta_2$}] {}
--   (2,0) node {};
\end{tikzpicture}  &
$2\beta_1 + 2\beta_2 + 3\beta_3 + 4\beta_4 + 3\beta_5 + 2\beta_6 + \beta_7$\\
\hline
$E_8$ & 
\begin{tikzpicture}
\draw
    (0,0) node[circle,fill=black,inner sep=0pt,minimum size=3pt,label=below:{$\beta_1$}] {}
 -- (1,0) node[circle,fill=black,inner sep=0pt,minimum size=3pt,label=below:{$\beta_3$}] {}
--   (2,0) node[circle,fill=black,inner sep=0pt,minimum size=3pt,label=below:{$\beta_4$}] {}
 -- (3,0) node[circle,fill=black,inner sep=0pt,minimum size=3pt,label=below:{$\beta_5$}] {}
-- (4,0) node[circle,fill=black,inner sep=0pt,minimum size=3pt,label=below:{$\beta_6$}] {}
-- (5,0) node[circle,fill=black,inner sep=0pt,minimum size=3pt,label=below:{$\beta_7$}] {}
-- (6,0) node[circle,fill=black,inner sep=0pt,minimum size=3pt,label=below:{$\beta_8$}] {};

\draw
 (2,0.7) node[circle,fill=black,inner sep=0pt,minimum size=3pt,label=above:{$\beta_2$}] {}
--   (2,0) node {};
\end{tikzpicture}  &
$2\beta_1 + 3\beta_2 + 4\beta_3 + 6\beta_4 + 5\beta_5 + 4\beta_6 + 3\beta_7 + 2\beta_8$\\
\hline
$F_4$ & 
\begin{tikzpicture}
\dynkin[labels={\beta_1,\beta_2,\beta_3,\beta_4},text style/.style={scale=1},edge length=1cm]F4
\end{tikzpicture} &
$2\beta_1 + 3\beta_2 + 4\beta_3 + 2\beta_4$\\
\hline
$G_2$ & 
\begin{tikzpicture}
\dynkin[reverse arrows, labels={\beta_1,\beta_2},text style/.style={scale=1},edge length=1cm]G2
\end{tikzpicture} &
$3\beta_1 + 2\beta_2$\\
\hline
\end{longtable}
\end{center}

Here is an example on checking whether a certain adjoint $(M \cap K)$-type is bottom layer or not -  let $G$ be of type $F_4$, whose Dynkin diagram is as given above. Suppose $\pi$ is a $(\mathfrak{g},K)$-module with lowest $K$-type $\eta = [0,0,1,0]$. Then $M_f$ is of type $A_2 \times A_1$ with $I(M_f) = \{1,2,4\}$. 




Consider $M = M_f$. Then there are two simple components of $M$ of type $A_2$ (corresponding to nodes $\{1,2\}$) and type $A_1$ (corresponding to the node $\{4\}$) respectively.
For the $A_2$ component, its adjoint representation has highest weight $\delta = \beta_1 + \beta_2$. Then one can check that 
\begin{align*}
\langle \eta + \delta , \beta_1^{\vee} \rangle &= 0 + \langle \beta_1 + \beta_2, \beta_1^{\vee} \rangle = 0 +2-1 = 1\\
\langle \eta + \delta , \beta_2^{\vee} \rangle &= 0 + \langle \beta_1 + \beta_2, \beta_2^{\vee} \rangle = 0 -1 +2 = 1\\
\langle \eta + \delta , \beta_3^{\vee} \rangle &= 1 + \langle \beta_1 + \beta_2, \beta_3^{\vee} \rangle = 1 + 0 - 2 = -1 \\
\langle \eta + \delta , \beta_4^{\vee} \rangle &= 0 + \langle \beta_1 + \beta_2, \beta_4^{\vee} \rangle = 0 + 0 + 0 = 0
\end{align*}
Since the third coordinate is $-1 < 0$, the adjoint $(M \cap K)$-type $V_{\mathfrak{m}}(\eta+\delta)$ is not bottom layer.

On the other hand, the adjoint representation of the $A_1$ component has highest weight $\delta' = \beta_4$. Therefore, one has
\begin{align*}
\langle \eta + \delta' , \beta_1^{\vee} \rangle &= 0 + \langle \beta_4, \beta_1^{\vee} \rangle = 0 + 0 = 0\\
\langle \eta + \delta' , \beta_2^{\vee} \rangle &= 0 + \langle \beta_4, \beta_2^{\vee} \rangle = 0 + 0 = 0\\
\langle \eta + \delta' , \beta_3^{\vee} \rangle &= 1 + \langle \beta_4, \beta_3^{\vee} \rangle = 1  - 1 = 0 \\
\langle \eta + \delta' , \beta_4^{\vee} \rangle &= 0 + \langle \beta_4, \beta_4^{\vee} \rangle = 0 + 2 = 2
\end{align*}
Since all the above coordinates are non-negative, $V_{\mathfrak{m}}(\eta+\delta')$ is a bottom layer $(M \cap K)$-type.
\end{example}

\subsection{Support of a module}
\begin{definition} \label{def-support}
Let $G$ be a complex reductive Lie group, and $\pi = J(\Lambda,-x\Lambda)$ be an irreducible $(\frg,K)$-module such that $\Lambda$ is dominant. Then the {\bf support} of $\pi$ is defined by
$$\mathrm{supp}(\pi) := \{i\ |\ \text{the reduced expression of}\ x\ \text{contains the simple reflection}\ s_i\}.$$
We say $\pi$ is {\bf fully supported} $\mathrm{supp}(\pi) = \{1,2,\dots, \mathrm{rank}(G)\}$.
\end{definition}

Essentially, the above definition is a special case of a result of Vogan \cite{V84}, rephrased by Paul on page 23 of the slides \cite{Pau17} in the language of \texttt{atlas}. The point is that whenever the Weyl group element $x$ does not contain certain simple reflection $s_i$, then the representation $\pi$ can be chomologically induced from the Levi factor of a proper $\theta$-stable parabolic subalgebra of $G$ within the good range. In the case of complex groups, where real parabolic induction and cohomological induction are essentially the same (modulo a unitary character), if $\pi$ is not fully supported, then the proper Levi subgroup $L$ where $\pi$ is induced from is determined by $I(L) = \mathrm{supp}(\pi)$.

Vogan's result had been extensively used in the study of Dirac series. For instance, Paul's interpretation was quoted as Theorem 2.4 of \cite{DDY20}, which initiated the classification of Dirac series of real exceptional Lie groups.

One is interested in the case of Hermitian modules with real infinitesimal character, where the irreducible modules are all of the form 
$$J(\lambda,-w\lambda) = J(\frac{1}{2}\eta+\nu,-\frac{1}{2}\eta+\nu)$$ 
as described in Section \ref{sec-cx}. 

Let $\Lambda:=\{\lambda\}$ be the dominant weight to which $\lambda$ is conjugate under the Weyl group action. We may and we will arrive at $\Lambda$ from $\lambda$ as follows: If $\lambda$ is dominant, we are done; otherwise select the smallest index $k_1$ such that $\lambda_{k_1}<0$ and apply the simple reflection $s_{k_1}$. We operate on $s_{k_1}\lambda$ similarly. By Theorem 4.5 of \cite{E95}, we will stop after finitely many steps and get $\Lambda=s_{k_p}\cdots s_{k_1}\lambda$. Moreover, the expression $s_{k_p}\cdots s_{k_1}$ is reduced. For convenience, we call this way of conjugating $\lambda$ to $\Lambda$ the \textit{first negative index algorithm}.

Note that the condition $w\eta = \eta$ implies that $w$ is in the Weyl subgroup $W(M_f)$ of $W$.
In other words, the reduced expression of $w$ consists only of $s_i$ with $i \in I(M_f)$.
Then we have the following:
\begin{proposition} \label{prop-full}
    Let $\pi = J(\lambda,-w\lambda)$ be as above. Let $s\in W$ be the element $s\lambda=\Lambda$ which is given by the first negative index algorithm. If $\pi$ is fully supported, then any reduced expression of $s$ must contain all simple reflections $s_j$ with $j \notin I(M_f)$. 
\end{proposition}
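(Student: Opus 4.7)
The plan is to recast $\pi = J(\lambda,-w\lambda)$ into the normal form $J(\Lambda,-x\Lambda)$ required by Definition~\ref{def-support} and then translate the full-support hypothesis into a combinatorial statement about $s$. By Theorem~\ref{thm-Zh}(b) we may act by $s$ simultaneously on both entries, giving
$$
\pi \;\cong\; J(s\lambda,-sw\lambda) \;=\; J(\Lambda,-(sws^{-1})\Lambda).
$$
So, modulo $\mathrm{Stab}_W(\Lambda)$, the Weyl element governing $\mathrm{supp}(\pi)$ may be taken to be $x := sws^{-1}$. Since, by Vogan's characterization recalled before the proposition, full support is an intrinsic property of $\pi$, it suffices to exhibit \emph{some} representative of $x$ whose reduced expression avoids a given simple reflection $s_j$ in order to conclude that $j \notin \mathrm{supp}(\pi)$.

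The second ingredient is the hypothesis $w\eta = \eta$: because $\eta$ is dominant with stabilizer exactly the parabolic subgroup $W(M_f)$, this forces $w \in W(M_f) = \langle s_i : i \in I(M_f)\rangle$, so every reduced expression of $w$ uses only simple reflections indexed by $I(M_f)$.

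I would then argue by contrapositive. Suppose there is some $j \notin I(M_f)$ and some reduced expression of $s$ that omits $s_j$. By the standard Coxeter-theoretic invariance (a consequence of Matsumoto's theorem) that the set of simple reflections appearing in any reduced expression of a Weyl group element is an invariant of the element, $s$ then lies in the standard parabolic subgroup $W_J$, where $J := \{1,\dots,\mathrm{rank}(G)\} \setminus \{j\}$. Since $j \notin I(M_f)$ also gives $w \in W_J$, we conclude $x = sws^{-1} \in W_J$, so some reduced expression of $x$ omits $s_j$. By the previous paragraph this forces $j \notin \mathrm{supp}(\pi)$, contradicting full support.

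The one potentially subtle point — and the step I expect to require the most care to write up cleanly — is the passage from the Zhelobenko parameter $(\lambda,-w\lambda)$ to the dominant form $(\Lambda,-x\Lambda)$ when $\Lambda$ is non-regular, because then $x$ is only determined modulo $\mathrm{Stab}_W(\Lambda)$ and Definition~\ref{def-support} literally reads off a \emph{chosen} reduced expression of $x$. The strategy above sidesteps this by working with the explicit representative $x = sws^{-1}$ and invoking the well-definedness of $\mathrm{supp}(\pi)$ as a module invariant (via Vogan's cohomological induction statement), after which the rest of the argument is a short exercise with standard parabolic subgroups of Coxeter groups.
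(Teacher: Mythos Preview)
Your argument is correct and is essentially the paper's own proof: both rewrite $\pi$ as $J(\Lambda,-sws^{-1}\Lambda)$ via Theorem~\ref{thm-Zh}(b), use $w\in W(M_f)$, and observe that if some reduced expression of $s$ omits $s_{j_0}$ with $j_0\notin I(M_f)$ then so does every reduced expression of $sws^{-1}$, contradicting full support. Your added remarks on Matsumoto's theorem and the non-regular $\Lambda$ case are sound elaborations of points the paper leaves implicit.
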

\begin{proof}
 Suppose on the contrary that one (thus all) reduced decomposition of $s$ does not contain the simple reflection $s_{j_0}$ for some $j_0 \notin I(M_f)$. Then by Theorem \ref{thm-Zh}(b), 
$$
\pi = J(s\lambda,-sw\lambda) = J(\Lambda,-sws^{-1}\Lambda).
$$
Since any reduced expression of $w$ does not contain any $s_k$ with $k \notin I(M_f)$, it follows that any reduced expression of $sws^{-1}$ does not contain $s_{j_0}$.  This contradicts to the assumption that $\pi$ is fully supported.
\end{proof}

We are now in the position to phrase a stronger version of Conjecture \ref{conj-FPP} for complex groups. To begin with, we say $\Lambda \in \mathfrak{h}^*$ {\bf lies inside FPP} if 
$$\langle \Lambda, \beta_i^{\vee} \rangle \leq 1$$ 
for all simple roots $\beta_i$. 
\begin{conjecture} \label{conj-FPP} Let $G$ be a complex simple Lie group, and $\pi$ be an irreducible, fully supported, Hermitian $(\mathfrak{g},K)$-module with real infinitesimal character $(\Lambda,-\Lambda)$ such that $\Lambda$ is dominant. 
Suppose $\Lambda$ does not lie inside FPP, then $\pi$ is not unitary up to level $\mathfrak{p}$. \end{conjecture}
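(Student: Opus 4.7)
The plan is to invoke the bottom layer theorem (Theorem \ref{thm-SV}) with $M=M_f$ to reduce the non-unitarity of $\pi$ to a signature computation on adjoint-type $(M_f\cap K)$-types of $\pi_{M_f}$. Since the lowest $(M_f\cap K)$-type $V_{\mathfrak{m}_f}(\eta)$ is one-dimensional by the definition of $I(M_f)$, the module $\pi_{M_f}$ restricted to the semisimple part of $M_f$ is an external tensor product $\pi_1\otimes\cdots\otimes \pi_r$ of spherical irreducible modules $\pi_i = J_{M_i}(\Lambda_i,-\Lambda_i)$ attached to the simple components $M_1,\dots,M_r$ of $M_f$, with each $\Lambda_i$ determined by $\Lambda$ via restriction to the Cartan of $M_i$.

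The central input is the classical signature formula for complex spherical modules used throughout \cite{B89,V86}: the Hermitian form of $\pi_i$ restricted to the adjoint $(M_i\cap K)$-type $V_{\mathfrak{m}_i}(\delta_i)$ becomes indefinite whenever some $M_i$-simple root $\beta$ satisfies $\langle\Lambda_i,\beta^\vee\rangle>1$. Theorem \ref{thm-SV} then yields non-unitarity of $\pi$ (at level $\mathfrak{p}$) provided one produces a simple component $M_i$ and an $M_i$-simple root $\beta$ satisfying (a) $\langle\Lambda_i,\beta^\vee\rangle>1$, and (b) the $(M_f\cap K)$-type $V_{\mathfrak{m}_f}(\eta+\delta_i)$ is bottom layer, i.e.\ $\langle\eta+\delta_i,\beta_j^\vee\rangle\ge 0$ for every $G$-simple root $\beta_j$ with $j\notin I(M_f)$. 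Write $\beta_{i_0}$ for the $G$-simple root witnessing the FPP violation, so that $\langle\Lambda,\beta_{i_0}^\vee\rangle>1$.

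If $i_0\in I(M_f)$, then $\beta_{i_0}$ is a simple root of some $M_i$, and $G$-dominance of $\Lambda$ transfers immediately to $M_i$-dominance of $\Lambda_i$ with $\langle\Lambda_i,\beta_{i_0}^\vee\rangle=\langle\Lambda,\beta_{i_0}^\vee\rangle>1$, so (a) is automatic with $\beta=\beta_{i_0}$, and one is left to verify (b) from Table \ref{table-adjoint} in the style of Example \ref{eg-bottom}. If $i_0\notin I(M_f)$ then $\pi_{M_f}$ may well be unitary and the direct reduction fails; here Proposition \ref{prop-full} is the correct lever, since full support forces $s_{i_0}$ to appear in the first-negative-index reduction from $\lambda$ to $\Lambda$. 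Tracing this step through the algorithm should propagate the inequality $\langle\Lambda,\beta_{i_0}^\vee\rangle>1$ into an FPP-type violation on some $\Lambda_i$ attached to a component $M_i$ adjacent to the node $i_0$, thereby reducing back to the first case; if necessary, one can replace $M_f$ by an intermediate Levi $M\supsetneq M_f$ so that the relevant adjoint type becomes bottom layer at the $M$-level, which is still within the range of Theorem \ref{thm-SV}.

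The main obstacle I anticipate is the combinatorial verification of (b): Example \ref{eg-bottom} already shows that the adjoint $(M_i\cap K)$-type need not be bottom layer for an arbitrary simple component $M_i$, so the witnessing $M_i$ must be chosen compatibly with both the Dynkin adjacency of $G$ and the numerical coefficients in Table \ref{table-adjoint}. This forces a type-by-type analysis across $A_p$, $B_p$, $C_p$, $D_p$, and each of $E_6,E_7,E_8,F_4,G_2$. A secondary difficulty is the transverse case $i_0\notin I(M_f)$, where the delicate interplay between the first negative index algorithm, the full-support constraint of Proposition \ref{prop-full}, and the pull-back of the violation to an adjacent component is, in my view, where the real work of the proof lies.
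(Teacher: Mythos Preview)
Your overall architecture---reduce to a Levi, locate a factor with an FPP violation, detect indefiniteness on an adjoint-type $K$-type, and push it back up via Theorem \ref{thm-SV}---matches the paper's. But there is a genuine gap at the point you label ``automatic''.

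You write that $\Lambda_i$ is ``determined by $\Lambda$ via restriction to the Cartan of $M_i$'' and that for $i_0\in I(M_f)$ one has $\langle\Lambda_i,\beta_{i_0}^\vee\rangle=\langle\Lambda,\beta_{i_0}^\vee\rangle$. This is false. The factor $\pi_{F_i}$ has infinitesimal character $\nu_i$, where $\lambda=\tfrac{\eta}{2}+\sum_j\nu_j$ as in \eqref{eq-lambda}; the $G$-dominant $\Lambda$ is $s\lambda$ for a generally nontrivial $s\in W$, and this $s$ mixes coordinates across the different $F_j$'s and across nodes outside $I(M_f)$. Thus $\langle\Lambda,\beta_{i_0}^\vee\rangle$ bears no direct relation to $\langle\nu_i,\beta_{i_0}^\vee\rangle$ or to its $W(F_i)$-dominant form. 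Consequently your case $i_0\in I(M_f)$ is not automatic at all: the implication ``$\Lambda$ outside FPP $\Rightarrow$ some factor infinitesimal character outside FPP'' is exactly Theorem \ref{thm-reduce}, whose proof (for $M_{cx}$, not $M_f$) is the substantive reduction step. It runs via the contrapositive: assuming every factor is inside FPP and $\pi$ is fully supported, one uses Proposition \ref{prop-full} to force interlacing constraints among the strings of coordinates, and then checks (case-by-case for classical types, with computer verification for $F_4$, $E_6$, $E_7$, $E_8$) that $\Lambda$ itself must lie inside FPP. Your sketch of the ``transverse'' case via Proposition \ref{prop-full} is in the right spirit, but the same mechanism is needed even in what you call the direct case.

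The second gap is the choice of Levi. Example \ref{eg-bottom} already exhibits a component of $M_f$ whose adjoint type fails to be bottom layer, so working at $M=M_f$ cannot succeed uniformly. The paper's remedy is to enlarge $M_f$ to the maximal $M_{cx}$ for which $V_{\mathfrak{m}}(\eta)$ is cx-basic (Definition \ref{def-cxfund}); Lemma \ref{lem-bottom} then guarantees the adjoint type of every \emph{spherical} factor of $\pi_{M_{cx}}$ is bottom layer. The price is that in non-simply-laced types $M_{cx}$ can have non-spherical factors (lowest $K$-type $[0,\dots,0,1]$ in type $B$, a fundamental weight in type $C$, and a short list in $F_4$, $G_2$), and for these the adjoint $K$-type argument is unavailable. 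The paper handles them separately: Lemma \ref{lem-b} and Lemma \ref{lem-c} for types $B$ and $C$, and a direct character-formula/signature analysis (Equation \eqref{eq-twoparts} and the subsequent claim) for the residual $F_4$ and $G_2$ cases. Your proposal does not anticipate this bifurcation.
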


Recall from \cite{W24} that a Hermitian module $\pi$ with lowest $K$-type $V_{\mathfrak{g}}(\tau)$ is {\it not unitary up to level $\mathfrak{p}$} if 
there exists a $K$-type $V_{\mathfrak{g}}(\gamma)$ in the decomposition of $V_{\mathfrak{g}}(\tau) \otimes \mathfrak{p}$ such that $\pi|_K$ has a copy of $V_{\mathfrak{g}}(\gamma)$ whose signature is opposite to that of $V_{\mathfrak{g}}(\tau)$.
Also, note that in the case when $G$ is a complex simple group, then $\mathfrak{p} \cong \mathfrak{g}_0$ is the adjoint $K$-type.

\subsection{General strategy} \label{sec-strategy}
Our strategy of proving Conjecture \ref{conj-FPP} for complex simple Lie groups $G$ is as follows:
\begin{enumerate}
    \item[(I)] For each complex simple Lie group $G$, we define the notion of {\bf complex basic (cx-basic)} $K$-types (Definition \ref{def-cxfund}). In particular, if the Dynkin diagram of $G$ is simply laced, then the only cx-basic $K$-type is the trivial one.

    \item[(II)] Suppose the hypothesis of Conjecture \ref{conj-FPP} holds for $\pi$, then there exists a parabolic subgroup $P = MN$ of $G$ containing $P_f=M_fN_f$ such that the following holds:
        \begin{itemize}
            \item Suppose the simple factors of $M$ are $F_1$, $\dots$, $F_k$, so that the inducing module $\pi_{M}$ in Equation \eqref{eq-bl} has simple factors $\pi_{F_1}$, $\dots$, $\pi_{F_k}$, then the lowest $F_i \cap K$-types of $\pi_{F_i}$ are cx-basic ; and
            \item At least one of the $\pi_{F_i}$ satisfies the hypothesis of the FPP conjecture (Theorem \ref{thm-reduce}). 
        \end{itemize}
    This will effectively reduce the study of FPP conjecture to the cases when the lowest $K$-type of $\pi = \pi_F$ is cx-basic.

    \item[(III)] Finally, we use the result of \cite{B89}, \cite{WZ23} (for classical groups) and \cite{BC05}, \cite{BC09} (for exceptional groups) to show that $\pi_F$ is nonunitary on a certain bottom layer $(F \cap K)$-type of level $\mathfrak{p}$. For instance, in the simply laced case where all $\pi_F$ is spherical, we take the adjoint $(F \cap K)$-type. Then Theorem \ref{thm-SV} implies that $\pi$ is not unitary up to level $\mathfrak{p}$, and the conjecture follows as a consequence.
\end{enumerate}

\section{cx-basic $K$-types} \label{sec-basic}
In this section, we will give the definition of cx-basic $K$-types, and will use it to determine the Levi subgroup $M$ appearing in Section \ref{sec-strategy}. As a consequence, we will prove that step (II) in Section \ref{sec-strategy} holds.

\begin{definition} \label{def-cxfund}
    Let $G$ be a simple Lie group. We say a $K$-type $V_{\mathfrak{g}}(\tau)$ {\bf cx-basic} if $\tau$ is of the following form:
    \begin{itemize}
        \item Type $A_n$ ($n \geq 1$), $D_n$ ($n \geq 4$) or $E_n$ ($n = 6,7,8$): $\tau = [0,\dots,0]$
        \item Type $B_n$: $\tau = [0,\dots,0]$ or $[0,\dots,0,1]$.
        \item Type $C_n$: $\tau = [0,\dots,0]$ or $[0,\dots,0,1,0,\dots,0]$.
        \item Type $F_4$: $\tau = [0,0,0,0]$, $[0,0,1,0]$ or $[0,0,0,1]$.
        \item Type $G_2$: $\tau = [0,0]$, $[1,0]$ or $[2,0]$.
    \end{itemize}
\end{definition}

For every $K$-type $V_{\mathfrak{g}}(\eta)$, there exists a unique maximal Levi subgroup $M_{cx}$ containing $M_f$ such that $V_{\mathfrak{m}}(\eta)$ is cx-basic. Here are a few examples:
\begin{example}
\begin{enumerate}
    \item[(a)] If $G$ is simply laced, then one always have $M_{cx} = M_f$, since the only cx-basic $K$-type is the trivial representation. 
    \item[(b)] Suppose $G$ is of type $B_8$ and $\eta = [2,0,0,3,0,1,0,1]$, then $M_f$ is of type $A_2 \times A_1 \times A_1$ corresponding to the nodes $I(M_f) = \{2,3,5,7\}$. Meanwhile, $M_{cx} \supset M_f$ corresponds to the nodes $I(M_{cx}) = \{2,3,5,7,8\}$.
    \item[(c)] Suppose $G$ is of type $G_2$, Then $I(M_{cx}) = \phi$ if $\eta = [a,b]$ with $a,b \geq 1$; $I(M_{cx}) = \{1\}$ if $\eta = [0,b]$ with $b \geq 1$; $I(M_{cx}) = \{2\}$ if $\eta = [a,0]$ with $a \geq 3$; and $I(M_{cx}) = \{1,2\}$ if $\eta = [0,0], [1,0]$ or $[2,0]$. 
    \item[(d)] Suppose $G$ is of type $F_4$, we give a table relating $\eta$ and $I(M_{cx})$. The coordinates marked by $\bullet$ can be taken as any integer $\geq 1$, and the coordinates marked by $\ast$ can be any integer $\geq 2$.
\begin{center}
\begin{longtable}{|c|c|c|c|} 
\caption{List of $M_{cx}$ for highest weights in $F_4$} \label{table-F4}\\
\hline
$I(M_{cx})$ & $\eta$  \\
\hline \hline
$\phi$ & 
$[\bullet,\bullet,\bullet,\bullet]$\\
\hline
$\{1\}$ & 
$[0,\bullet,\bullet,\bullet]$\\
\hline
$\{2\}$ & 
$[\bullet,0,\ast,\bullet]$\\
\hline
$\{3\}$ & 
$[\bullet,\bullet,0,\bullet]$\\
\hline
$\{4\}$ & 
$[\bullet,\bullet,\bullet,0]$\\
\hline
$\{1,2\}$ & 
$[0,0,\ast,\bullet]$\\
\hline
$\{1,3\}$ & 
$[0,\bullet,0,\bullet]$\\
\hline
$\{1,4\}$ & 
$[0,\bullet,\bullet,0]$\\
\hline
$\{2,3\}$ & 
$[\bullet,0,0,\ast]$, $[\bullet,0,1,\bullet]$\\
\hline
$\{2,4\}$ & 
$[\bullet,0,\ast,0]$\\
\hline
$\{3,4\}$ & 
$[\bullet,\bullet,0,0]$\\
\hline
$\{1,2,3\}$ & 
$[0,0,0,\ast]$, $[0,0,1,\bullet]$\\
\hline
$\{1,2,4\}$ & 
$[0,0,\ast,0]$\\
\hline
$\{1,3,4\}$ & 
$[0,\bullet,0,0]$\\
\hline
$\{2,3,4\}$ & 
$[\bullet,0,0,0]$, $[\bullet,0,1,0]$, $[\bullet,0,0,1]$\\
\hline
$\{1,2,3,4\}$ & 
$[0,0,0,0]$, $[0,0,1,0]$, $[0,0,0,1]$\\
\hline
\end{longtable}
\end{center}
\end{enumerate}
\end{example}
Here is the main theorem of this section:
\begin{theorem} \label{thm-reduce}
Let $\pi$ be an irreducible, Hermitian $(\mathfrak{g},K)$-module satisfying the hypothesis of Conjecture \ref{conj-FPP}, and $M_{cx} \supset M_f$ is defined as above. Consider the $(\mathfrak{m}_{cx}, M_{cx} \cap K)$-module $\pi_{M_{cx}}$ appearing in Equation \eqref{eq-bl} with $M= M_{cx}$. Then there is a simple factor $\pi_H$ of $\pi_{M_{cx}}$ satisfying the hypothesis of Conjecture \ref{conj-FPP}.
\end{theorem}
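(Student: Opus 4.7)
The plan is to decompose $\pi_{M_{cx}}$ into a tensor product over the simple factors of $M_{cx}$ and exhibit one factor $\pi_H$ that satisfies all three clauses of the hypothesis of Conjecture \ref{conj-FPP}: real infinitesimal character, fully supported, and infinitesimal character not lying inside FPP.

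First I would write $M_{cx}=Z\cdot\prod_j F_j$, with $Z$ the central torus and the $F_j$ the simple factors. Since parabolic induction for complex groups preserves infinitesimal character, $\pi_{M_{cx}}$ inherits the real, dominant infinitesimal character $(\Lambda,-\Lambda)$ of $\pi$; correspondingly $\pi_{M_{cx}}=\chi\otimes\bigotimes_j\pi_{F_j}$ for some central character $\chi$, with each $\pi_{F_j}$ irreducible Hermitian of real, dominant infinitesimal character $\Lambda_j:=\Lambda|_{\mathfrak{h}\cap \mathfrak{f}_j}$. The first two clauses of the FPP hypothesis thus hold automatically for every factor.

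Next I would produce an $H$ with $\Lambda_H$ outside FPP for $H$. Pick any simple root $\beta_{j_0}$ of $G$ with $\langle \Lambda,\beta_{j_0}^{\vee}\rangle>1$, which exists by hypothesis. If $j_0\in I(M_{cx})$, then $\beta_{j_0}$ is a simple root of some factor $F_{i_0}$ and $\Lambda_{i_0}$ visibly fails FPP for $F_{i_0}$, so we take $H=F_{i_0}$. The serious case is $j_0\notin I(M_{cx})$, which forces $k_{j_0}\ge 1$; here I would combine the cx-basic constraints of Definition \ref{def-cxfund} and Table \ref{table-F4} with the Hermitian condition $w\nu=-\nu$ for $w\in W(M_f)$ to produce some $j'\in I(M_{cx})$ with $\langle\Lambda,\beta_{j'}^{\vee}\rangle>1$. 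The underlying reason should be rigidity: the $\eta$-coefficients permitted at nodes of $I(M_{cx})\setminus I(M_f)$ are highly restricted (a single $1$ in prescribed positions for $B_n$, $C_n$, $F_4$, $G_2$), and full support of $\pi$ via Proposition \ref{prop-full} propagates the FPP violation inwards.

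For the chosen $H$ I would then verify full support of $\pi_H$. Decompose $w=\prod_j w_j$ with $w_j\in W(F_j)\cap W(M_f)$, which is legitimate because $W(M_f)\subset W(M_{cx})=\prod_j W(F_j)$ splits as a direct product and $w\in W(M_f)$ by construction. Then $\pi_H$ has the shape $J_H(\lambda|_H,-w_H\lambda|_H)$, and Proposition \ref{prop-full} applied inside $H$ reduces full support of $\pi_H$ to checking that every simple reflection of $H$ appears in any reduced expression of the element conjugating $\lambda|_H$ to $\Lambda_H$; this should transfer from the analogous statement for $\pi$ by tracking the first negative index algorithm factor by factor. The main obstacle is the cx-basic rigidity claim in the previous paragraph: I expect it to require a uniform Dynkin-theoretic argument, likely a short case analysis paralleling the tables given for $M_{cx}$. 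Once this is in hand, the full-support transfer and the tensor decomposition are routine bookkeeping.
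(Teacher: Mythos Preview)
There is a genuine gap at the very first step. Your identification $\Lambda_j:=\Lambda|_{\mathfrak{h}\cap\mathfrak{f}_j}$ is not correct: the $G$-dominant representative $\Lambda=s\lambda$ is obtained with $s\in W(G)$, and by Proposition~\ref{prop-full} the reduced expression of $s$ must contain every $s_k$ with $k\notin I(M_f)$, so in particular $s$ involves reflections at nodes outside $I(M_{cx})$ whenever $M_{cx}\ne G$. Such reflections alter the values $\langle\,\cdot\,,\beta_{j_0}^{\vee}\rangle$ at neighbouring nodes $j_0\in I(M_{cx})$, so the pairing $\langle\Lambda,\beta_{j_0}^{\vee}\rangle$ is \emph{not} the $j_0$-coordinate of the $F_{i_0}$-dominant infinitesimal character $\Lambda_{i_0}$ of $\pi_{F_{i_0}}$. (Already for $G$ of type $A_2$, $\eta=[1,0]$, $\nu=x\beta_2$ one finds $\langle\Lambda,\beta_2^{\vee}\rangle=x+\tfrac12$ while $\langle\Lambda_{F_2},\beta_2^{\vee}\rangle=2x$.) Hence your ``visible'' case $j_0\in I(M_{cx})$ is not visible at all, and the ``serious case'' $j_0\notin I(M_{cx})$ is in fact the generic one; your sketch of how cx-basic rigidity and Proposition~\ref{prop-full} would push the violation inward is exactly the substance of the theorem, not a routine afterthought. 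A second, smaller problem: you invoke Proposition~\ref{prop-full} to \emph{verify} full support of $\pi_H$, but that proposition is only a necessary condition, so it cannot be used in that direction.

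The paper's argument runs in the opposite direction and avoids any comparison between $\Lambda$ and the $\Lambda_j$. It assumes, to the contrary, that every factor $\pi_{H_i}$ has infinitesimal character inside FPP; this bounds each $\nu_i$ in $\lambda=\tfrac12\eta+\sum_i\nu_i$. For classical types one writes $\lambda$ in standard coordinates block by block, uses full support of $\pi$ (the interlacing condition coming from Proposition~\ref{prop-full}) to control how the blocks fit together, and then checks directly that the $G$-dominant $\Lambda=\{\lambda\}$ lands inside FPP, contradicting the hypothesis. For the exceptional types the same scheme is carried out by parametrising all Hermitian $\nu$'s and doing a finite (computer-assisted) check. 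In short, the heart of the proof is precisely the passage from bounds on the $\nu_i$ to a bound on $\Lambda$, and that passage cannot be shortcut by restricting $\Lambda$ to the factors.
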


\begin{proof}
Firstly, consider the induced module in Equation \eqref{eq-bl} with $M = M_f$. Suppose the simple factors of the inducing module $\pi_{M_f}$ are the spherical modules $\pi_{F_1}$, $\dots$, $\pi_{F_l}$. Then each $\pi_{F_i}$ is uniquely determined by its $\nu_i$-parameter. More precisely, suppose $I(F_i) = \{i_1, i_2, \dots, i_k\}$, then the infinitesimal character $\nu_i$ of $\pi_{F_i}$ is given by (a $W(F_i)$-conjugate of):
$$\nu_i = x_1 \beta_{i_1} + x_2 \beta_{i_2} + \dots + x_k \beta_{i_k}\quad \quad (x_i \in \mathbb{R}),$$
where $\beta_i$ are the simple roots of $G$. Moreover, if $\pi$ admits an invariant Hermitian form, then one must have $w_i\nu_i = -\nu_i$ for some element $w_i$ in the Weyl group corresponding to $H_i$. In such a setting, the Langlands parameter of $\pi_{M_f} = J_{M_f}(\lambda,-w\lambda)$ as well as $\pi = J(\lambda,-w\lambda)$ are given by:
\begin{equation} \label{eq-lambda}
(\lambda,-w\lambda) = \left(\frac{\eta}{2} + \nu_1 + \nu_2 + \dots + \nu_l;\ \frac{-\eta}{2} + \nu_1 + \nu_2 + \dots + \nu_l\right)
\end{equation}
with $\langle \eta, \nu_i\rangle = 0$ for all $i$. 

\smallskip
Now we are in the position to prove the theorem. Let $M_{cx} \supset M_f$ be the Levi subgroup defined in the beginning of this section, and $\pi_{H_i}$ are the simple factors of $\pi_{M_{cx}}$ in the induced module \eqref{eq-bl}. Suppose on contrary that the infinitesimal character of all $\pi_{H_i}$ lies inside FPP, then the theorem follows one can prove either of the following holds: 
\begin{enumerate}
    \item[(i)] $\pi = J(\lambda,-w\lambda)$ is not fully supported, or \item[(ii)] the infinitesimal character of $\pi = J(\lambda,-w\lambda)$ lies inside FPP. 
\end{enumerate}

Let $\alpha(\ell):= (\overbrace{a_{1}, a_{2}, \dots, -a_{2},-a_{1}}^{\ell\ \mathrm{terms}})$ for some $a_1 \geq a_2 \geq \dots \geq a_{\lfloor \ell/2 \rfloor} \geq 0$ and $\nu(\ell) = (\nu_1, \dots, \nu_{\ell})$ for $\nu_1 \geq \dots \geq \nu_l \geq 0$. For classical groups, the $\lambda$ in Equation \eqref{eq-lambda} are listed in usual coordinates as follows:

Type $A$: 
$$\lambda = (\cdots|\ \overbrace{\frac{i+1}{2}, \dots, \frac{i+1}{2}}^{r_{i+1}\ \mathrm{terms}}\ |\ \overbrace{\frac{i}{2},\dots,\frac{i}{2}}^{r_{i}\ \mathrm{terms}}\ |\ \cdots) +(\cdots \ |\ \alpha(r_{i+1})\ |\ \alpha(r_i)\ |\ \cdots)$$

Type $B$: 
\[
\lambda = \begin{cases}
(\cdots|\ \overbrace{\frac{i}{2}, \dots, \frac{i}{2}}^{r_{i}\ \mathrm{terms}}\ |\ \cdots\ |\ \overbrace{\frac{1}{2},\dots,\frac{1}{2}}^{r_{1}\ \mathrm{terms}}\ |\ \overbrace{0,\dots,0}^{r_{0}\ \mathrm{terms}}) + (\cdots\ |\  \alpha(r_{i})\ |\ \cdots\ |\ \alpha(r_{1})\ |\ \nu(r_0));\ or \\
(\cdots\ |\ \overbrace{\frac{4i+3}{4}, \dots, \frac{4i+3}{4}}^{r_{\frac{4i+3}{2}}\ \mathrm{terms}}\ |\ \overbrace{\frac{4i+1}{4}, \dots, \frac{4i+1}{4}}^{r_{\frac{4i+1}{2}}\ \mathrm{terms}}\ |\ \cdots) + (\cdots|\ \alpha(r_{\frac{4i+3}{2}})\ |\  \alpha(r_{\frac{4i+1}{2}})\ |\ \cdots)\\
\end{cases}
\]

Type $C$:
\[
\lambda = (\cdots|\ \overbrace{\frac{i}{2}, \dots, \frac{i}{2}}^{r_{i}\ \mathrm{terms}}\ |\ \cdots\ |\ \overbrace{\frac{2}{2},\dots,\frac{2}{2}}^{r_{2}\ \mathrm{terms}}\ |\ \overbrace{\frac{1}{2},\dots,\frac{1}{2}}^{r_{1}\ \mathrm{terms}}, \overbrace{0,\dots,0}^{r_{0}\ \mathrm{terms}}) + (\cdots\ |\  \alpha(r_{i})\ |\ \cdots\ |\  \alpha(r_{2})\ |\ \alpha(r_{1}); \nu(r_0))
\]

Type $D$:
\[
\lambda = \begin{cases}
(\cdots|\ \overbrace{\frac{i}{2}, \dots, \frac{i}{2}}^{r_{i}\ \mathrm{terms}}\ |\ \cdots\ |\ \overbrace{\frac{1}{2},\dots,\frac{1}{2}}^{r_{1}\ \mathrm{terms}}\ |\ \overbrace{0,\dots,0}^{r_{0}\ \mathrm{terms}}) + (\cdots\ |\  \alpha(r_{i})\ |\ \cdots\ |\ \alpha(r_{1})\ |\ \nu(r_0)); or \\
(\cdots\ |\ \overbrace{\frac{4i+3}{4}, \dots, \frac{4i+3}{4}}^{r_{\frac{4i+3}{2}}\ \mathrm{terms}}\ |\ \overbrace{\frac{4i+1}{4}, \dots, \frac{4i+1}{4}}^{r_{\frac{4i+1}{2}}\ \mathrm{terms}}\ |\ \cdots) + (\cdots|\ \alpha(r_{\frac{4i+3}{2}})\ |\  \alpha(r_{\frac{4i+1}{2}})\ |\ \cdots)\\
\end{cases}
\]

Here each term within $| \cdot |$ gives the infinitesimal character of the simple factor $\pi_{H_i}$ appearing in $\pi_{M_{cx}}$. By assumption, 
these coordinates must lie inside FPP. On the other hand, if  
$\pi$ is fully supported, then Proposition \ref{prop-full} implies that the strings of numbers separated by the $| \cdot |$'s are {\it interlaced} (c.f. \cite{DW20}, \cite{DW22}). 

Combining these two observations, one can easily show that $\pi$ is either not fully supported, or in the case when $\pi$ is fully supported ($\Leftrightarrow$ the strings of numbers are interlaced), then the dominant form $\Lambda = s\lambda$ must lie inside the FPP.

\medskip
As for exceptional cases, we begin with $G$ of exceptional type $G_2$. By our choice of simple roots in Table \ref{table-adjoint}, the simple roots $\beta_1 = [2,-1]$ and $\beta_2 = [-3,2]$.

Firstly, note that we only need to study the case when $M_{cx} \neq G$. Then all such possible $\lambda = \frac{1}{2}\eta + \nu$ are given by:
$$\lambda = \begin{cases}
    \frac{1}{2}[a,0] + x\beta_2 = \frac{1}{2}[a,0] + x\cdot[-3,2] = [\frac{a}{2}-3x,2x] & a \geq 3,\ 0 \leq x \leq  \frac{1}{2}\\
    \frac{1}{2}[0,b] + y\beta_1 = \frac{1}{2}[0,b] + y\cdot[2,-1] = [2y,\frac{b}{2}-
    y] & b \geq 1,\ 0 \leq y \leq  \frac{1}{2}\\
    \frac{1}{2}[c,d] & c,d \geq 1\\
\end{cases}$$
where $I(M_{cx}) = \{1\}$ in the first case, $I(M_{cx}) =\{2\}$ in the second case, and $I(M_{cx}) = \phi$ in the third case. 

In the case when $I(M) = \phi$, $\pi = J(\frac{1}{2}[c,d],-\frac{1}{2}[c,d]) = X(\frac{1}{2}[c,d],-\frac{1}{2}[c,d])$ is a tempered representations with empty support, so one can ignore this case. As for the first two cases, one requires 
$$0 \leq \langle x\beta_2,\beta_2^{\vee}\rangle = 2x \leq 1  \quad \quad \quad 0 \leq \langle y\beta_1,\beta_1^{\vee}\rangle = 2y \leq 1$$ 
respectively, so that the infinitesimal character of $\pi_{M_{cx}}$ lies inside the FPP. By Proposition \ref{prop-full}, $\pi$ is fully supported if  $\frac{a}{2}-3x < 0$ in the first case and $\frac{b}{2}-y < 0$ in the second case. However, this is not possible by our choices of $a$ and $b$. 

\smallskip
As for the other exceptional groups, we use computer software to finish the checking.  
To present the calculations needed behind the proof, consider the example when $G$ is of type $F_4$ and $\eta = [0,0,2,0]$, so that $I(M_{cx}) = \{1,2,4\}$. In this case, $\lambda = \frac{1}{2}\eta+\nu$ is given by:
$$\lambda = \frac{1}{2}\eta + x(\beta_1+\beta_2) + y\beta_4 = [x,x,1-2x-y,2y], \quad 0 \leq x \leq 1,\ 0 \leq y \leq \frac{1}{2}.$$
For instance, the $A_2$ factor $\pi_{A_2}$ corresponding to the nodes $1$ and $2$ has $\nu$-values chosen as a multiple of $(\beta_1 + \beta_2)$ so that $\pi_{A_2}$ admits an invariant Hermitian form. Moreover, the infinitesimal character of $\pi_{A_2}$ lies inside FPP if and only if $0 \leq \langle x(\beta_1+\beta_2),\beta_i^{\vee} \rangle\leq 1$ for $i = 1,2$, i.e. $0 \leq x \leq 1$. 

By looking at the expression of $\lambda$ above, Proposition \ref{prop-full} implies that if $\pi$ is fully supported, then $1-2x-y < 0$. Under such assumptions, one can show that for all $0 \leq x \leq 1$ and $0 \leq y \leq \frac{1}{2}$, the resulting $\Lambda = s\lambda$ lies inside FPP, violating the hypothesis of Conjecture \ref{conj-FPP}.

The auxiliary computer files are available from the following DOI:
\begin{verbatim}
 10.13140/RG.2.2.35634.00960
\end{verbatim}
\end{proof}




\section{Proof of FPP conjecture}\label{sec-proof-of-FPP}
We begin this section with a lemma:
\begin{lemma} \label{lem-bottom}
    Let $\pi$ be an irreducible, Hermitian $(\mathfrak{g},K)$-module with real infinitesimal character and lowest $K$-type $V_{\mathfrak{g}}(\eta)$. Suppose $M_{cx} \supset M_f$ is the Levi subgroup defined in the beginning of Section \ref{sec-basic}, and $\pi_{M_{cx}}$ is the inducing module in Equation \eqref{eq-bl}. Then for any simple factor $\pi_{H}$ of $\pi_{M_{cx}}$ such that $\pi_H$ is a spherical $(\mathfrak{h}, H \cap K)$-module, the adjoint $(H \cap K)$-type is $V_{\mathfrak{m}_{cx}}(\eta+\delta)$ bottom layer.
\end{lemma}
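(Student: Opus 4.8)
The claim is that when $\pi_H$ is a spherical factor of $\pi_{M_{cx}}$, the adjoint $(H\cap K)$-type, which appears in $\pi_H$ with highest weight (in terms of $\mathfrak{h}$) equal to $\delta$, yields the bottom layer condition for the weight $\eta+\delta$ of $\mathfrak{m}_{cx}$. By Definition~\ref{def-bottomlayer}, this amounts to checking that $\eta+\delta$ defines a dominant weight of $K$, i.e.\ $\langle \eta+\delta,\beta_i^\vee\rangle\ge 0$ for every simple root $\beta_i$ of $\mathfrak{g}$. The first step is to split the simple roots into three groups: (1) the $\beta_i$ with $i\in I(H)$, the nodes of the simple factor $H$ of $M_{cx}$; (2) the $\beta_i$ with $i\in I(M_{cx})\setminus I(H)$, belonging to other simple factors of $M_{cx}$; and (3) the $\beta_i$ with $i\notin I(M_{cx})$.

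\textbf{Group (1): nodes inside $H$.} Here $\delta$ is the highest weight of the adjoint representation of $H$, so for $i\in I(H)$ the pairing $\langle\delta,\beta_i^\vee\rangle$ is a coefficient of a fundamental weight of $H$ in the highest root of $H$; such coefficients are $\ge 0$, and in fact $\langle\delta,\beta_i^\vee\rangle\ge -1$ could only occur at adjacent nodes, but one checks case by case using Table~\ref{table-adjoint} that for the cx-basic types ($A_n$: trivial; $B_n$: $[0,\dots,0,1]$; $C_n$: $[0,\dots,0,1,0,\dots,0]$; etc.) the combination $\eta_H+\delta$ is always dominant for $H$. Concretely, for a spherical factor ($\eta_H=0$) one just needs $\langle\delta,\beta_i^\vee\rangle\ge 0$ for $i\in I(H)$, which is immediate since $\delta$ is dominant for $H$.

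\textbf{Groups (2) and (3): nodes outside $H$.} For $i\notin I(H)$ we have $\langle\eta,\beta_i^\vee\rangle\ge 0$ automatically (if $i\in I(M_{cx})\setminus I(H)$ this is $\ge 0$ by how $M_{cx}$ sits inside $G$; if $i\notin I(M_{cx})$ then $\langle\eta,\beta_i^\vee\rangle=k_i\ge 1$ when $i\notin I(M_f)$, and $\ge 0$ in general). So it suffices to bound $\langle\delta,\beta_i^\vee\rangle$ from below. The only way $\langle\delta,\beta_i^\vee\rangle$ can be negative is when $\beta_i$ is adjacent in the Dynkin diagram of $\mathfrak{g}$ to some node of $H$ carrying a nonzero coefficient in $\delta$; since $\delta$ is supported only on $I(H)$, $\langle\delta,\beta_i^\vee\rangle = -\sum_{j\in I(H),\ j\sim i} c_j\langle\beta_j,\beta_i^\vee\rangle$ where $c_j$ are the (positive) coefficients of $\delta$. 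The key point is that for a node $i$ \emph{not} in $I(M_{cx})$ that is adjacent to $H$, the coefficient $k_i=\langle\eta,\beta_i^\vee\rangle$ is large enough (using the cx-basic classification and the maximality of $M_{cx}$) to absorb this negative contribution; and for $i\in I(M_{cx})\setminus I(H)$ adjacent to $H$, the Dynkin diagram of $M_{cx}$ being a disjoint union forces $i$ and $I(H)$ to be non-adjacent, so the contribution is $0$. I would organize this as a short type-by-type check against Definition~\ref{def-cxfund} and Table~\ref{table-adjoint}.

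\textbf{Main obstacle.} The delicate case is group (3) with $\mathfrak{g}$ non-simply-laced: when $H$ is a short-root factor of type $C$ or $B$ and the adjacent node $\beta_i$ of $\mathfrak{g}$ is a long root (or vice versa), the normalization $\langle\beta_j,\beta_i^\vee\rangle$ can be $-2$ rather than $-1$, so one must verify the coefficient $k_i$ (equivalently the precise shape of $\delta$ forced by $\eta$ being cx-basic on $H$ but \emph{maximally} so) still dominates. This is exactly where the careful definition of cx-basic $K$-types and the maximality of $M_{cx}$ are used; I expect the bulk of the work to be a finite check over the non-simply-laced types $B_n,C_n,F_4,G_2$ and the two adjacency patterns at the ``boundary'' of $H$, mirroring the computations in Example~\ref{eg-bottom}.
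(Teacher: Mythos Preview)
Your proposal is correct and follows essentially the same approach as the paper: the paper's proof is simply ``case-by-case calculations as in Example~\ref{eg-bottom}'' (with a reference to \cite[Proposition~2.3]{BW23}), and your three-group decomposition together with the boundary analysis at nodes adjacent to $I(H)$ is exactly how one organizes that case check. Your identification of the main obstacle---that in the non-simply-laced situation the pairing $\langle\beta_j,\beta_i^\vee\rangle$ can be $-2$ or $-3$, and that precisely the maximality of $M_{cx}$ forces $k_i$ to be large enough (e.g.\ $k_i\ge 2$ in $C_n$ and $F_4$, $k_i\ge 3$ in $G_2$)---is the correct reason the lemma holds, and is implicit in the paper's example-driven verification.
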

The proof follows essentially from some case-by-case calculations Example \ref{eg-bottom}. Similar analysis is also made in \cite[Proposition 2.3]{BW23}. We present an example below:
\begin{example}
    Let $G$ be of type $E_8$ with $\eta = [1,2,0,0,0,3,0,0]$. Then $M_{cx} = M_f$ with $I(M) = \{3,4,5,7,8\}$. There are two simple spherical factors $\pi_{H_1}$ (of type $A_3$, corresponding to nodes $\{3,4,5\}$) and $\pi_{H_2}$ (of type $A_2$, corresponding to nodes $\{7,8\}$). 

    The adjoint representation of $\pi_{H_1}$ has highest weight $\delta_1 = \beta_3+\beta_4+\beta_5$. Then one can check that $\langle \eta + \delta_1, \beta_i^{\vee} \rangle \geq 0$ for all simple roots $\beta_i$. More explicitly , one has
    \begin{align*}
    \langle \eta + \delta_1,  \beta_1^{\vee} \rangle &= 1 + \langle \beta_3, \beta_1^{\vee} \rangle = 1 - 1 \geq 0\\
    \langle \eta + \delta_1,  \beta_2^{\vee} \rangle &= 2 + \langle \beta_4, \beta_2^{\vee} \rangle = 2 - 1 \geq 0\\
    \langle \eta + \delta_1,  \beta_6^{\vee} \rangle &= 3 + \langle \beta_5, \beta_2^{\vee} \rangle = 3 - 1 \geq 0
    \end{align*}
    and $\langle \eta + \delta_1, \beta_j^{\vee} \rangle = \langle \eta, \beta_j^{\vee} \rangle \geq 0$ for $j \neq 1,2,6$.

    Similarly, the adjoint representation of $\pi_{H_2}$ has highest weight $\delta_2 = \beta_7+\beta_8$. Then $\langle \eta + \delta_2, \beta_k^{\vee} \rangle = \langle \eta, \beta_k^{\vee} \rangle \geq 0$ for $k \neq 6$, and 
    $$\langle \eta + \delta_1, \beta_6^{\vee} \rangle = 3 + \langle \beta_7, \beta_6^{\vee} \rangle  = 3 -1 \geq 0.$$
\end{example}

Now we are in the position to prove Conjecture \ref{conj-FPP}:

\smallskip
\noindent {\it Proof of Conjecture \ref{conj-FPP}.} Retain the settings in Lemma \ref{lem-bottom}, and suppose $\pi = J(\lambda,-w\lambda)$ satisfies the hypothesis of Conjecture \ref{conj-FPP}. By Theorem \ref{thm-reduce}, there must be a simple factor $\pi_H$ of $\pi_{M}$ satisfying the hypothesis of Conjecture \ref{conj-FPP}.

If $\pi_H$ is spherical, then the results in \cite[Lemma 3.3]{BC05}, \cite[Lemma 3.2]{BC09} imply that it has indefinite signature on the (level $\mathfrak{p}$) adjoint $(H \cap K)$-type . As a consequence, Lemma \ref{lem-bottom} immediately implies that this $(M \cap K)$-type is bottom layer, and hence it has indefinite signature on $\pi$ as well.

So we are left with the following cases of non-spherical simple factors $\pi_H$ of $\pi_M$ satisfying the hypothesis of Conjecture \ref{conj-FPP}:

\medskip
\noindent \underline{Type $B$:} $M_{cx}$ of type $A_{i_1} \times \dots \times A_{i_k} \times B_l$, and $\pi_H = \pi_{B_l}$ has lowest $(H \cap K)$-type $V_{\mathfrak{b}_l}(\tau)$, where $\tau = [0,\dots,0,1]$.

\smallskip
\noindent \underline{Type $C$:}
$M_{cx}$ of type $A_{i_1} \times \dots \times A_{i_k} \times C_l$, and $\pi_H = \pi$ has lowest $(H \cap K)$-type $V_{\mathfrak{c}_l}(\tau)$, where $\tau = [0,\dots,0,1,0,\dots,0]$.

\smallskip
\noindent \underline{Type $F_4$:}
\begin{itemize}
    \item $I(M_{cx}) = \{2,3\}$ of type $B_2$, and $\pi_H = \pi_{B_2}$ has lowest $(H \cap K)$-type $V_{\mathfrak{b}_2}(\tau)$, where $\tau = [0,1]$.
    \item $I(M_{cx}) = \{1,2,3\}$ of type $B_3$, and $\pi_H = \pi_{B_3}$ has lowest $(H \cap K)$-type $V_{\mathfrak{b}_3}(\tau)$, where $\tau = [0,0,1]$.
    \item $I(M_{cx}) = \{2,3,4\}$ of type $C_3$, and $\pi_H = \pi_{C_3}$ has lowest $(H \cap K)$-type $V_{\mathfrak{c}_3}(\tau)$, where $\tau = [0,1,0]$ or $[0,0,1]$ (in $F_4$ coordinates).
    \item $I(M_{cx}) = \{1,2,3,4\}$ of type $F_4$, and $\pi_H = \pi_{F_4}$ has lowest $(H \cap K)$-type $V_{\mathfrak{f}_4}(\tau)$, where $\tau = [0,0,1,0]$ or $[0,0,0,1]$.
    \end{itemize}

\noindent \underline{Type $G_2$:}
$I(M_{cx}) = \{1,2\}$ of type $G_2$, and $\pi_H = \pi_{G_2}$ has lowest $(H \cap K)$-type $V_{\mathfrak{g}_2}(\tau)$, where $\tau = [1,0]$ or $[2,0]$.    

We now deal with these cases separately:
\begin{lemma} \label{lem-b}
    Let $G$ be of type $B_l$, and $\pi$ be a Hermitian, irreducible $(\mathfrak{g},K)$-module with real infinitesimal character and lowest $K$-type $V_{\mathfrak{b}_l}(\tau)$, where 
    $$\tau = [0,\dots,0,1]\quad \quad (i.e.\ \tau = (\frac{1}{2},\frac{1}{2},\dots,\frac{1}{2})\ \text{in usual coordinates}).$$ 
    Suppose $\pi_{B_l}$ satisfies they hypothesis of Conjecture \ref{conj-FPP}, then its Hermitian form has indefinite signature on the level $\mathfrak{p}$ $K$-type $V_{\mathfrak{b}_l}(\gamma)$ with
$$\gamma = \tau + \beta_1+\beta_2 + \dots +\beta_n$$    
(in usual coordinates, $\gamma = (\frac{3}{2},\frac{1}{2},\dots,\frac{1}{2})$).
\end{lemma}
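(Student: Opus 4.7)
The plan is to exhibit an explicit negative sign on $V_{\mathfrak{b}_l}(\gamma)$ relative to the $+1$ normalization on the lowest $K$-type $V_{\mathfrak{b}_l}(\tau)$. First, I would record the Langlands parameters: since $\tau=(\tfrac{1}{2},\dots,\tfrac{1}{2})$ is fixed by $W(A_{l-1})$, we have $I(M_f)=\{1,\dots,l-1\}$, and Equation \eqref{eq-lambda} gives $\lambda=\tfrac{1}{2}\tau+\nu$ with $\nu=a_1\beta_1+\cdots+a_{l-1}\beta_{l-1}$ and $w\nu=-\nu$ for some $w\in W(A_{l-1})$. Invoking the type-$B$ expression from Section~3, the failure of FPP on the dominant conjugate $\Lambda$ yields a simple root $\beta_{i_0}$ with $\langle\Lambda,\beta_{i_0}^{\vee}\rangle>1$, and the full support assumption forces the associated strings of $\lambda$-coordinates to interlace by Proposition \ref{prop-full}.

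Next, I would verify that $\gamma-\tau=e_1$ is a short root appearing in $\mathfrak{p}$ with one-dimensional weight space, so that $V_{\mathfrak{b}_l}(\gamma)$ occurs with multiplicity one in $V_{\mathfrak{b}_l}(\tau)\otimes\mathfrak{p}$; the signature on this copy is therefore a scalar. To evaluate it, I would apply the Barbasch--Vogan intertwining operator formalism for complex groups. The spin $K$-type $\tau$ is petite for $B_l$, so the relevant long intertwining operator decomposes into a product of $2\times 2$ blocks indexed by the positive roots sent to negatives by $w$, and each block contributes an explicit M\"{o}bius-type factor in $\langle\lambda,\alpha^{\vee}\rangle$. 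Comparing the resulting scalar with the one on $V_{\mathfrak{b}_l}(\tau)$ (which is $+1$ by the choice of normalization) reduces the lemma to showing that this product is negative.

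The main obstacle is precisely this sign determination. I plan to isolate a single ``bad'' factor of the form $(1-c)/(1+c)$ with $c=\langle\Lambda,\beta_{i_0}^{\vee}\rangle>1$, which contributes a negative multiplier, and to show that the remaining factors combine into a positive quantity. Here the interlacing supplied by Proposition \ref{prop-full} should let me pair the remaining roots in such a way that each pair yields a nonnegative Möbius product, while the FPP hypothesis on the other simple factors of $\pi_{M_{cx}}$ (via Theorem \ref{thm-reduce}) keeps every individual parameter $\langle\Lambda,\alpha^{\vee}\rangle$ inside $[0,1]$. This last step is the technical heart of the argument; it is essentially the signature computation for the spin lowest $K$-type of $B_l$ carried out in \cite{WZ23}, which I expect to quote rather than reprove in full, after confirming that the bottom layer identification of Lemma \ref{lem-bottom} transports their calculation to the setting at hand.
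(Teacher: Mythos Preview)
Your route is a direct intertwining-operator sign computation on the petite spin $K$-type; the paper instead uses a structural decomposition. It partitions the coordinates of $\lambda$ into those lying in $\tfrac{1}{2}\mathbb{Z}$ (call them $\lambda_{1/2}$) and the rest ($\lambda'$), then invokes the Kazhdan--Lusztig conjecture to realize $\pi$ as parabolically induced from $J_{A_{l-p-1}}(\lambda',-w_A\lambda')\boxtimes J_{B_p}(\lambda_{1/2},-w_B\lambda_{1/2})$. Failure of FPP for $\lambda$ forces failure for one of the two pieces; the type-$A$ piece is handled by the known FPP for $GL$, the half-integral $B_p$ piece by \cite{BDW22} or \cite[Theorem 7.6]{WZ23}, and the indefiniteness on the relevant $(L\cap K)$-type is then pushed up via \cite[Theorem 10.5]{V86}. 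This bypasses the global product-of-factors analysis entirely.

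Your sketch has two soft spots. First, your appeal to ``the FPP hypothesis on the other simple factors of $\pi_{M_{cx}}$ (via Theorem \ref{thm-reduce})'' is misplaced: in this lemma $\tau=[0,\dots,0,1]$ is itself cx-basic, so $M_{cx}=G$ and there are no other simple factors; Theorem \ref{thm-reduce} gives you nothing here, and in particular it does \emph{not} force the remaining $\langle\Lambda,\alpha^{\vee}\rangle$ into $[0,1]$. Second, isolating a single $(1-c)/(1+c)$ factor with $c>1$ and declaring the rest positive by ``pairing'' is too optimistic: the intertwining product is over \emph{all} positive roots made negative, not just simple ones, and several such pairings can exceed $1$ simultaneously, so the sign bookkeeping is genuinely delicate. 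This is exactly why the paper offloads the half-integral case to \cite{WZ23} wholesale rather than reorganizing its proof, and why it separates out the non-half-integral coordinates (where the problem reduces to type $A$) via the Kazhdan--Lusztig step. If you want to salvage your direct approach, you should replace the Theorem \ref{thm-reduce} invocation by the integrality split $\lambda=\lambda'\sqcup\lambda_{1/2}$ and then quote \cite{WZ23} on the $\lambda_{1/2}$ block; at that point you have essentially reproduced the paper's argument.
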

\begin{proof}
    For convenience, we work on the usual coordinates, where $\frac{\tau}{2} = (\frac{1}{4},\dots,\frac{1}{4})$, and $\pi_{B_l} = J(\lambda,-w\lambda)$ with 
    $$\lambda = \frac{\tau}{2} + \nu = (\frac{1}{4},\dots,\frac{1}{4}) + \alpha(l) = (\frac{1}{4}+\nu_1,\frac{1}{4}+\nu_2,\dots,\frac{1}{4}-\nu_2,\frac{1}{4}-\nu_1).$$
    Partition the above coordinates of $\lambda$ into $$\lambda =   \lambda' \sqcup \lambda_{\frac{1}{2}} \quad \quad -w\lambda =   -w_A\lambda' \sqcup -w_B\lambda_{\frac{1}{2}}, $$ 
    where $\lambda_{\frac{1}{2}}$ contains all coordinates of $\lambda$ of the form $\frac{1}{2}\mathbb{Z}$, and $\lambda'$ are the remaining coordinates. Then the Kazhdan-Lusztig conjecture implies that $\pi_{B_l}$ is of the form:
    $$\pi = \mathrm{Ind}_{LU}^G(J_{A_{l-p-1}}(\lambda',-w_{A}\lambda') \boxtimes J_{B_p}(\lambda_{\frac{1}{2}},-w_{B}\lambda_{\frac{1}{2}})\boxtimes 1),$$ 
    where $L$ is the Levi subgroup of $G$ of type $A_{l-p-1} \times B_p$ with $p$ being the number of coordinates of $\lambda_{\frac{1}{2}}$. 

    By hypothesis, the dominant form of $\lambda$ does not lie inside FPP. Then it is easy to check that the dominant form of either $\lambda'$ or $\lambda_{\frac{1}{2}}$ also does not lie inside FPP. In the first case, $J_{A_{l-p-1}}(\lambda',-w_{A}\lambda')$ has opposite signatures on the $\widetilde{U}(l-p)$-types with highest weights $(\frac{1}{2},\frac{1}{2},\dots, \frac{1}{2},\frac{1}{2})$ and $(\frac{3}{2},\frac{1}{2},\dots, \frac{1}{2},\frac{-1}{2})$.
    In the second case, one can either apply (a small generalization of) \cite[Section 4.5]{BDW22} or simply \cite[Theorem 7.6]{WZ23} to conclude that $(\lambda_{\frac{1}{2}},-w_{B}\lambda_{\frac{1}{2}})$ has opposite signatures on the $Spin(2p+1)$-types with highest weights $(\frac{1}{2},\frac{1}{2},\dots, \frac{1}{2},\frac{1}{2})$ and $(\frac{3}{2},\frac{1}{2},\dots, \frac{1}{2})$. In both cases, the result follows from the preservation of indefiniteness of Hermitian forms under real parabolic induction (c.f. \cite[Theorem 10.5]{V86}).
\end{proof}

\begin{lemma} \label{lem-c}
    Let $G$ be of type $C_l$, and $\pi$ be a Hermitian, irreducible $(\mathfrak{g},K)$-module with real infinitesimal character and lowest $K$-type $V_{\mathfrak{c}_l}(\tau)$, where 
    $$\tau = [\overbrace{0,\dots,0}^{(i-1)\ \mathrm{terms}},1,\overbrace{0,\dots,0}^{j\ \mathrm{terms}}]\quad \quad (i.e.\ \tau = (\overbrace{1,\dots,1}^{i\ \mathrm{terms}},\overbrace{0,\dots,0}^{j\ \mathrm{terms}})\ \text{in usual coordinates}).$$ 
    Suppose $\pi$ satisfies they hypothesis of Conjecture \ref{conj-FPP}, then its Hermitian form has indefinite signature on the level $\mathfrak{p}$ $K$-type $V_{\mathfrak{c}_l}(\gamma_i)$ ($i = 1,2$) with
$$\gamma_1 = \tau + \beta_1+ \dots + \beta_i, \quad \quad  \gamma_2 =
\tau + \beta_1+ \dots + \beta_{i+1} + 2(\beta_{i+2} + \dots + \beta_{i+j-1}) +\beta_{i+j}.$$
(in usual coordinates, $ \gamma_1 = (2,\overbrace{1\dots,1}^{(i-2)\ \mathrm{terms}},\overbrace{0,\dots,0}^{(j+1)\ \mathrm{terms}})$, $\gamma_2 = (2,\overbrace{1\dots,1}^{i\ \mathrm{terms}},\overbrace{0,\dots,0}^{(j-1)\ \mathrm{terms}})$). 
\end{lemma}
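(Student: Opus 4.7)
The plan is to mirror the argument of Lemma \ref{lem-b}, adapted to type $C_l$. Working in usual coordinates, $\tau/2 = (\tfrac{1}{2}, \dots, \tfrac{1}{2}, 0, \dots, 0)$ with $i$ halves followed by $j$ zeros, and $\lambda = \tau/2 + \nu$ with $w\nu = -\nu$. Partition the coordinates of $\lambda$ into $\lambda = \lambda' \sqcup \lambda_{1/2}$, where $\lambda_{1/2}$ collects those entries lying in $\tfrac{1}{2}\mathbb{Z}$ and $\lambda'$ collects the remainder, and partition $-w\lambda$ analogously. By the Kazhdan--Lusztig conjecture (as invoked in Lemma \ref{lem-b}), realize $\pi$ as the real parabolic induction
$$\pi = \mathrm{Ind}_{LU}^G \bigl( J_{A_{k-1}}(\lambda', -w_A\lambda') \boxtimes J_{C_p}(\lambda_{1/2}, -w_C\lambda_{1/2}) \boxtimes \mathbf{1} \bigr),$$
from a Levi $L$ of type $A_{k-1} \times C_p$ with $k + p = l$. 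The lowest $K$-type $\tau$ restricts to a shifted-determinant $\widetilde{U}(k)$-character on the $A$-factor and to a lowest $(K \cap L)$-type of the form $(1^{i_1}, 0^{j_1})$ on the $C_p$-factor. Since $\lambda$ violates FPP by hypothesis, the dominant form of at least one of $\lambda'$ and $\lambda_{1/2}$ must do so as well.

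If the violation sits in the $A$-factor, the FPP theorem for $U(p,q)$ proved in \cite{W24} applies and yields opposite Hermitian signatures on the lowest $\widetilde{U}(k)$-type and on its $(1, 0, \dots, 0, -1)$ adjoint deformation. A bottom-layer analysis identifies this deformation, after re-embedding in $C_l$, with $\gamma_1 = \tau + (e_1 - e_i) = (2, 1^{i-2}, 0^{j+1})$. If instead the violation sits in the $C_p$-factor, the $C$-type analogue of \cite[Section 4.5]{BDW22} (or \cite[Theorem 7.6]{WZ23}) furnishes opposite signatures on the $C_p$-factor's lowest $K$-type and on a level-$\mathfrak{p}$ adjoint deformation, which the bottom-layer analysis identifies with $\gamma_2 = \tau + (e_1 + e_{i+1}) = (2, 1^i, 0^{j-1})$. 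In either case, \cite[Theorem 10.5]{V86} (preservation of indefiniteness under real parabolic induction) then lifts the indefiniteness to $\pi$.

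The main obstacle lies in the bookkeeping around the KL decomposition: making it explicit enough to read off the lowest $(K \cap L)$-type of each factor, and verifying that the two adjoint-type deformations translate, via the bottom-layer correspondence, precisely to the dominant weights $\gamma_1$ and $\gamma_2$ stated. In particular, one needs to confirm that a violation of FPP in the $A$-factor forces a GL-adjoint deformation aligned with the simple roots $\beta_1, \dots, \beta_{i-1}$ (producing $\gamma_1$), while a violation in the $C_p$-factor produces a deformation along $\beta_1 + \dots + \beta_{i+1} + 2(\beta_{i+2} + \dots + \beta_{i+j-1}) + \beta_{i+j}$ (producing $\gamma_2$). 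Small boundary cases (e.g.\ $i \leq 1$, $j = 0$, or one of $\lambda'$, $\lambda_{1/2}$ empty) will need brief separate treatment but should not present serious difficulty.
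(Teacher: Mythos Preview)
Your treatment of the first case (violation in the $A$-factor) is broadly in line with the paper's: the paper also splits $\pi_{M_f}=\pi_A\boxtimes\pi_C$ along the natural Levi $M_f\cong A_{i-1}\times C_j$, and when $\pi_A$ violates FPP it invokes Lemma~\ref{lem-bottom} (adjoint $K$-type of the $A$-block is bottom layer) to get indefiniteness at $\gamma_1$. Your KL-based partition into half-integral versus non-half-integral coordinates is a different bookkeeping device and does not in general coincide with the $A_{i-1}\times C_j$ split, but for this case the difference is not fatal.

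The genuine gap is in your second case. You assert that a ``$C$-type analogue of \cite[Section~4.5]{BDW22} (or \cite[Theorem~7.6]{WZ23})'' handles the $C_p$-factor, but no such result is available: \cite{WZ23} concerns complex spin groups (type $B$), and \cite[Section~4.5]{BDW22} is only invoked in the paper for the type~$B$ lemma. More importantly, your $C_p$-factor again has lowest $K$-type of the shape $(1^{i_1},0^{j_1})$, so you are reducing the lemma to an instance of itself without any inductive mechanism. The paper's proof of this case is substantially different and is where the real work lies: assuming $\pi_A$ is already unitary up to level $\mathfrak{p}$, one deforms the large coordinates of the spherical $C_j$-factor by a parameter $x$, shows (via intertwining-operator analysis) that the signature at $\gamma_2$ is constant along the deformation, and then at a special half-integral value $x_0$ embeds $I(x_0)$ into a larger symplectic group via $\mathrm{Ind}_{GL(2M-1)\times Sp(2l)}^{Sp(2l+4M-2)}(F(M-1)\boxtimes I(x_0))$. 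Using \cite[Lemma~8.2(1)]{B89} one locates a composition factor whose $GL(i+2)$-piece violates the (already known) type~$A$ FPP, producing indefiniteness at the relevant $K$-type and a contradiction. This deformation-and-embedding argument from \cite[Chapter~9]{B89} is the missing idea in your proposal; a direct parallel of the type~$B$ argument does not go through.
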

\begin{proof}
    Firstly, note that the Levi subgroup $M_f$ attached to $\pi$ is of type $A_{i-1} \times C_j$. Let 
    $$\pi_{M_f} := \pi_A \boxtimes \pi_C$$
    be the inducing module in \eqref{eq-bl} with $M = M_f$. By hypothesis and Theorem \ref{thm-reduce}, either the $\frac{1}{2}\eta + \nu$ parameter on $\pi_A$ or the $\frac{1}{2}\eta + \nu$ parameter on $\pi_C$ does not lie inside FPP.
    In the first case, the result follows from Lemma \ref{lem-bottom} that $\pi$ has indefinite form on $V_{\mathfrak{c}_l}(\gamma_1)$. 

    \medskip
     In the second case, we adopt the main argument used in \cite[Chapter 9]{B89}. Firstly, we assume $\pi_A$ is unitary up to level $\mathfrak{a}_{i-1} \cap \mathfrak{p}$, otherwise the $\pi$ is not unitary at $V_{\mathfrak{c}_l}(\gamma_1)$ as in the first case. Write $\pi = J(\lambda,-w\lambda)$ with
    $$\lambda = \frac{\tau}{2} + \nu = (\overbrace{\frac{1}{2},\dots,\frac{1}{2}}^{i\ \mathrm{terms}};\overbrace{0,\dots,0}^{j\ \mathrm{terms}}) + (\alpha(i);\nu(j)) = (\lambda_1,\dots,\lambda_i; \lambda_{i+1},\dots,\lambda_{i+j}).$$
    By hypothesis, neither the dominant form of $\lambda$ nor the dominant form of $(\lambda_{i+1},\dots,\lambda_{i+j})$
    lies inside FPP. Therefore, there must be some $1 \leq r \leq j$ such that
    \begin{equation} \label{eq-ineq}
    \lambda_{i+1}, \dots, \lambda_{i+r} > \mathrm{max}\{|\lambda_1|,\dots,|\lambda_i|,|\lambda_{i+r+1}|, \dots, |\lambda_{i+j}|\}+1.
    \end{equation}
    
    For all $x \geq 0$, let 
    $$\pi_C(x) := J((\lambda_{i+1}+x,\dots,\lambda_{i+r}+x,\lambda_{i+r+1},\dots,\lambda_{i+j}),(\lambda_{i+1}+x,\dots,\lambda_{i+r}+x,\lambda_{i+r+1},\dots,\lambda_{i+j})).$$ 
    be a spherical module, and  
    $$I(x) := \mathrm{Ind}_{M_fN_f}^{Sp(2l)}(\pi_A \boxtimes \pi_C(x) \boxtimes 1).$$ 
    Denote its lowest $K$-type subquotient is denoted as $\pi(x)$. By Equation \eqref{eq-ineq} and some standard analysis on intertwining operators, the multiplicities and signatures of the $K$-type $V_{\mathfrak{c}_l}(\gamma_2)$ in $I(x)$ 
    is equal to that of $\pi(x)$ for all $x \geq 0$.

    \smallskip
    Suppose on contrary that $\pi = \pi(0)$ has definite signature at the $K$-type $V_{\mathfrak{c}_l}(\gamma_2)$, the above arguments imply the same conclusion holds for $I(x)$ for all $x \geq 0$.
    Now take $x = x_0$ be the smallest non-negative number such that $M:=\lambda_{i+r}+x_0 \in \frac{1}{2}\mathbb{N}$ and 
    \begin{equation} \label{eq-M}
    M > \max\{|\lambda_1|,\dots,|\lambda_i|\} +2 \end{equation}
    We will show that $I(x_0)$ has indefinite form on 
    $V_{\mathfrak{c}_l}(\gamma_2)$.
    
    Consider the trivial module $F(M-1) := J_{A_{2m-2}}((M-1,M-2,\dots,-(M-1));(M-1,M-2,\dots,-(M-1)))$ of $GL(2M-1)$, and the induced module
    \begin{equation} \label{eq-indc}
    \begin{aligned}
    &\mathrm{Ind}_{GL(2M-1) \times Sp(2l)}^{Sp(2l+4M-2)}(F(M-1) \boxtimes I(x_0)) \\ 
    =\ &\mathrm{Ind}_{GL(2M-1) \times GL(i) \times Sp(2j)}^{Sp(2l+4M-2)}(F(M-1) \boxtimes \pi_A \boxtimes \pi_C(x))\\ 
    =\ &\mathrm{Ind}_{GL(i) \times GL(2M-1) \times Sp(2j)}^{Sp(2l+4M-2)}(\pi_A \boxtimes F(M-1) \boxtimes \pi_C(x)),
    \end{aligned}
    \end{equation}
    Since we assume $I(x_0)$ has definite form on $V_{\mathfrak{c}_l}(\gamma_2)$, the induced module \eqref{eq-indc} also has definite signature on the $K$-type $V_{\mathfrak{c}_{l+2M-1}}((2,\overbrace{1,\dots,1}^{i\ \mathrm{terms}},\overbrace{0,\dots,0}^{(j+2M-2)\ \mathrm{terms}}))$. On the other hand, the arguments in \cite[Lemma 8.2(1)]{B89} implies that a composition factor $\Xi$ of the induced module \eqref{eq-indc} is the lowest $K$-type subquotient of
    $\mathrm{Ind}_{GL(i+2)\times Sp(2j+4M-6)}^{Sp(2j+4M-2)}(\pi_A' \boxtimes \pi_C')$, where 
    \begin{align*}
        \pi_A' &:= J\left((\frac{1}{2},\dots,\frac{1}{2}) + \alpha(i), M, -(M-1)); (\frac{-1}{2},\dots,\frac{-1}{2}) + \alpha(i), M-1, -M)\right)\\
        \pi_C' &:= J(\xi,\xi),\quad \xi := (\lambda_{i+1}+x_0,\dots,\lambda_{i+r-1}+x_0,M-1,M-2,\dots,-(M-2)).
    \end{align*}
    In particular, its lowest $K$-type is $V_{\mathfrak{c}_{l+2M-1}}(\tau')$, where
    $$\tau' = (\overbrace{1,\dots,1}^{(i+2)\ \mathrm{terms}},\overbrace{0,\dots,0}^{(j+2M-2)\ \mathrm{terms}}).$$
    But the inequality \eqref{eq-M} and the validity of Conjecture \ref{conj-FPP} for Lie groups of type $A$ imply that $\pi_A'$ has indefinite Hermitian form on the $K$-type $V_{\mathfrak{a}_{i+1}}((2,\overbrace{1,\dots,1}^{i\ \mathrm{terms}},0))$, which is bottom layer in $\Xi$. Consequently, $\Xi$ also has indefinite form on $V_{\mathfrak{c}_{l+2M-1}}((2,\overbrace{1,\dots,1}^{i\ \mathrm{terms}},\overbrace{0,\dots,0}^{(j+2M-1)\ \mathrm{terms}}))$, contradicting the fact that all composition factors of \eqref{eq-indc} have definite Hermitian form on $V_{\mathfrak{c}_{l+2M-1}}((2,\overbrace{1,\dots,1}^{i\ \mathrm{terms}},\overbrace{0,\dots,0}^{(j+2M-1)\ \mathrm{terms}}))$.
\end{proof}

One can easily check that the $\gamma$'s given in Lemma \ref{lem-b} and Lemma \ref{lem-c} are always $M_{cx}$-bottom layer for any $G$ and $\eta$. For instance, if $G$ is of type $F_4$ $\eta = [0,0,1,\bullet]$ with $\bullet \geq 1$. Then Table \ref{table-F4} implies that $I(M_{cx}) = \{1,2,3\}$ with a single simple factor $H$ of type $B_3$. By Lemma \ref{lem-b}, the indefinite $(H \cap K)$-type $V_{\mathfrak{h}}(\gamma) = V_{\mathfrak{h}}(\eta + \beta_1 + \beta_2 + \beta_3)$ satisfies 
\begin{align*}
\langle \eta + \beta_1 + \beta_2 + \beta_3, \beta_1^{\vee} \rangle &= 0 + 2 - 1 + 0\geq 0 \\
\langle \eta + \beta_1 + \beta_2 + \beta_3, \beta_2^{\vee} \rangle &= 0 - 1 + 2 - 1 \geq 0 \\
\langle \eta + \beta_1 + \beta_2 + \beta_3, \beta_3^{\vee} \rangle &= 1 + 0 - 2 + 2 \geq 0 \\
\langle \eta + \beta_1 + \beta_2 + \beta_3, \beta_4^{\vee} \rangle &= \bullet + 0 + 0 -1 \geq 0 
\end{align*}
and hence it is bottom layer.

\medskip
Consequently, we are left with the exceptional nonspherical cases, that is: 

\begin{itemize}
    \item Type $F_4$: $\pi_{F_4}$ has lowest $K$-type $V_{\mathfrak{f}_4}(\tau)$, where $\tau = [0,0,1,0]$ or $[0,0,0,1]$; and
    \item Type $G_2$: $\pi_{G_2}$ has lowest $K$-type $V_{\mathfrak{g}_2}(\tau)$, where $\tau = [1,0]$ or $[2,0]$. 
\end{itemize} 

For each $\tau$ listed above, the Langlands parameters $(\lambda,-w\lambda) = (\frac{\tau}{2} + \nu, \frac{-\tau}{2} + \nu)$ such that the irreducible module $\pi = J(\lambda,-w\lambda)$ has lowest $K$-type $V_{\mathfrak{g}}(\tau)$ are listed below:
\begin{itemize}
    \item Type $F_4$: 
    \begin{align*}
    \tau = [0,0,1,0]&: \left([x,x,\frac{1}{2}-2x-z,2z],\ [x,x,-\frac{1}{2}-2x-z,2z]\right), \quad x,z \geq 0 \\
    \tau = [0,0,0,1]&:\ \left([a,b,c,\frac{1}{2}-(a+2b+\frac{3}{2}c)],\ [a,b,c,-\frac{1}{2}-(a+2b+\frac{3}{2}c)]\right), \quad a,b,c \geq 0. 
    \end{align*}
    
    \item Type $G_2$: 
        \begin{align*}
    \tau = [1,0]&: \left([\frac{1}{2}-3x,2x],\ [-\frac{1}{2}-3x,2x]\right), \quad x \geq 0 \\
    \tau = [2,0]&:\ \left([1-3x,2x],\ [-1-3x,2x]\right), \quad x \geq 0.
    \end{align*}
\end{itemize} 

Consider the induced module $I(\lambda) := \mathrm{Ind}_{M_fN_f}^G(\pi_{M_f} \boxtimes 1)$ in Equation \eqref{eq-bl}. Then the character formula of $\pi = J(\lambda,-w\lambda)$ can be split into two parts:
\begin{equation}\label{eq-twoparts}
\begin{aligned}
\pi = \sum_{w_1 \in W(M_f)} \alpha_{w_1}(\lambda) X(\lambda,-w_1w\lambda)
+ \sum_{w_2 \notin W(M_f)} \alpha_{w_2}(\lambda)X(\lambda,-w_2w\lambda)
\end{aligned}
\end{equation}
where $\alpha_{w_1}(\lambda), \alpha_{w_2}(\lambda) \in \mathbb{Z}$, and we set $\alpha_{w'} = 0$ whenever $\lambda - w'w\lambda$ is not in the weight lattice to avoid any ambiguity (c.f. Section \ref{sec-cx}).

Note the first part of \eqref{eq-twoparts} is the character formula of $I(\lambda)$, and 
$\alpha_{w'}(\lambda) \neq 0$ implies that the dominant form $\lambda - w'w\lambda$ must be equal to $\eta$ plus some {\it positive} roots of $G$. 

Suppose $\pi$ satisfies the hypothesis of Conjecture \ref{conj-FPP}. Then Theorem \ref{thm-reduce} implies that the same hypothesis holds for $\pi_{M_f}$. Consequently, $\pi_{M_f}$ must have indefinite signature on some level $\mathfrak{p}$ $(M_f \cap K)$-type by Lemma \ref{lem-bottom}. Then by \cite[Theorem 10.5]{V86} on signatures of real parabolically induced module again, $I(\lambda)$ would have indefinite signature on the level $\mathfrak{p}$ $K$-type $V_{\mathfrak{g}}(\gamma)$ given by:
\begin{itemize}    
\item Type $F_4$: 
\begin{align*}
\eta = [0,0,1,0]&:\ \gamma = [1,0,0,1]\quad or \quad [0,0,0,2];\\  
\eta = [0,0,0,1]&:\ \gamma = [0,0,1,0] 
\end{align*}

\item Type $G_2$: 
\begin{align*}
\eta = [1,0]&:\ \gamma = [2,0];\\  
\eta = [2,0]&:\ \gamma = [1,1] 
\end{align*}
\end{itemize}

We make the following {\it claim}.

\medskip
Let $(\lambda,-w\lambda)$ be as given above, such that the dominant form of $\lambda$ lies outside of FPP. If there exists $w_2 \notin W(M_f)$ such 
that the $\lambda - w_2w\lambda$ is equal to $\eta$ plus some positive roots. Then the 
dominant form $\widetilde{\gamma}$ of $\lambda - w_2w\lambda$ has \texttt{height} higher than the $\gamma$'s listed above.

\medskip
Assuming the validity of the claim, then the
second part of Equation \eqref{eq-twoparts} 
does not contain any $K$-type of the form $V_{\mathfrak{g}}(\gamma)$. Hence the claim implies that $I(\lambda)$ and $\pi$ have the same multiplicity and signature on the $K$-type $V_{\mathfrak{g}}(\gamma)$, verifying the conjecture in these cases.

\medskip
Let us prove the claim.

For $G_2$ and $\eta = [1,0]$, 
$$(\lambda,-w\lambda) = \left([\frac{1}{2}-3x,2x],\ [-\frac{1}{2}-3x,2x]\right)$$ 
satisfies the hypothesis if and only if $x > \frac{3}{2}$. Indeed, for any $w\in W(G_2)$, let $w\lambda=[t_1, t_2]$. Assuming that $w\lambda$ is dominant, a direct check shows that $\max\{t_1, t_2\}\leq 1$ can not happen if $x>\frac{3}{2}$; moreover,   $\max\{t_1, t_2\}>1$ can not happen if $0\leq x\leq \frac{3}{2}$. Under the assumption that $x>\frac{3}{2}$, we further check that  
the second part of \eqref{eq-twoparts} does not contain
$[2,0]$.

For $G_2$ and $\eta = [2,0]$, 
$$(\lambda,-w\lambda) = \left([1-3x,2x],\ [-1-3x,2x]\right)$$ satisfies the hypothesis if and only if $x > \frac{2}{3}$. Indeed, for any $w\in W(G_2)$, let $w\lambda=[t_1, t_2]$. Assuming that $w\lambda$ is dominant, a direct check shows that $\max\{t_1, t_2\}\leq 1$ can not happen if $x>\frac{2}{3}$; moreover,   $\max\{t_1, t_2\}>1$ can not happen if $0\leq x\leq \frac{2}{3}$. Under the assumption that $x>\frac{2}{3}$, we further check that  
the second part of \eqref{eq-twoparts} does not contain
$[1,1]$.

For $F_4$ and $\eta = [0,0,1,0]$, 
$$(\lambda,-w\lambda) = \left([x,x,\frac{1}{2}-2x-z,2z],\ [x,x, -\frac{1}{2}-2x-z,2z]\right).$$ 
If $z > \frac{1}{2}$, then Lemma \ref{lem-c} would immediately imply that the module has indefinite form on $V_{\mathfrak{f}_4}([0,0,0,2])$. So we assume $0 \leq z \leq \frac{1}{2}$ from now on. Then one can easily check the hypothesis is satisfied if and only if $x > 2$. Indeed, for any $w\in W(F_4)$, let $w\lambda=[t_1, t_2, t_3, t_4]$. Assuming that $w\lambda$ is dominant, a direct check shows that $\max\{t_1, t_2, t_3, t_4\}\leq 1$ can not happen if $x>2$; moreover,   $\max\{t_1, t_2, t_3, t_4\}>1$ can not happen if $0\leq x\leq 2$. Under the assumption that $x>2$, we further check that  
the second part of \eqref{eq-twoparts} does not contain
$[1,0,0,1]$ or $[0,0,0,2]$.

\smallskip
For $F_4$ and $\eta = [0,0,0,1]$, 
$$(\lambda,-w\lambda) =\left([a,b,c,\frac{1}{2}-(a+2b+\frac{3}{2}c)],\ [a,b,c,-\frac{1}{2}-(a+2b+\frac{3}{2}c)]\right).
$$ 

We enumerate that there are $29$ elements $w_1$ in the Weyl group of $F_4$ such that there exists $a,b,c$ such that $\max\{a,b,c\}>1$ and that $\Lambda = \{\lambda\}=w_1\lambda$ is outside FPP.  For example, when $w_1=s_1 s_2 s_3 s_4$ we have that $\Lambda=w_1\lambda= [b+\frac{c}{2}-\frac{1}{2}, a,-a-\frac{c}{2}+ \frac{1}{2}, c]$. When $\max\{a,b,c\}>1$, one computes that $\Lambda$ is outside FPP if and only if the following holds:
\begin{itemize}
\item[$\bullet$] $0\leq a <\frac{1}{2}$, $1+a<b\leq \frac{3}{2}$, $3-2 b<c\leq 1-2 a$; or 
\item[$\bullet$] $0\leq a <\frac{1}{2}$, $\frac{3}{2}<b$, $0\leq c\leq 1-2 a$; or
\item[$\bullet$]  $a=\frac{1}{2}$, $\frac{3}{2}<b$, $c=0$.
\end{itemize}
Under the above assumptions, we check the second part of \eqref{eq-twoparts} does not contain $[0,0,1,0]$. We handle the other $28$ $w_1$'s similarly.

The details for $F_4$ are contained in the file ‘‘F4-FPP-two-etas.pdf" under the aforementioned DOI in Section \ref{sec-proof-of-FPP}.

\bigskip
This completes the proof of Conjecture \ref{conj-FPP}. \qed

\medskip

\centerline{\scshape Funding} 
Dong is supported by the National Natural Science Foundation of China (no. 12171344).
Wong is supported by the National Natural Science Foundation of China (no. 12371033) and the 
Shenzhen Science and Technology Innovation Committee grant
(no. 20220818094918001).

\centerline{\scshape Acknowledgements} 
We wish to thank the Tianyuan Mathematical Center in Southeast China and Tianyuan Mathematics Research Center for offering wonderful working conditions so that the authors can meet and finish this paper.


\begin{thebibliography}{ALTV20}

\bibitem[ALTV20] {ALTV20} J.~Adams, M.~van Leeuwen, P.~Trapa and D.~Vogan, {\it Unitary representations of real reductive groups}, Ast{\'erisque} \textbf{417} (2020).

\bibitem[B89]{B89}
D.~Barbasch,
{\it  The unitary dual for complex classical Lie groups},
Invent. Math.  \textbf{96} (1989), 103--176.


\bibitem[B04]{B04}
D.~Barbasch,
{\it Relevant and petite K-types for split groups, in Functional analysis VIII}, Various
Publications Series (Aarhus), \textbf{47} (2004), pp.~35--71.

\bibitem[B10]{B10}
D.~Barbasch, {\it  The  unitary spherical spectrum for split classical groups},
Journal of Inst. of Math. Jussieu \textbf{9} (2010) Issue 2, pp.~265--356.

\bibitem[BC05]{BC05}
D.~Barbasch, D.~Ciubotaru,
{\it Spherical unitary principal series}, Pure Appl. Math. Q. \textbf{1} (2005), no. 4, 755–789.

\bibitem[BC09]{BC09}
D.~Barbasch, D.~Ciubotaru,
{\it Whittaker unitary dual for affine graded Hecke algebras of type E}, 
Compositio Math. \textbf{145}, issue 6 (2009), pp.~1563--1616.


\bibitem[BDW22]{BDW22} D.~Barbasch, C.-P.~Dong, K.D.~Wong,
{\it Dirac series for complex classical Lie groups: A multiplicity one theorem}, Adv. Math. \textbf{403} (2022), Paper No. 108370, 47 pp.


\bibitem[BP11]{BP11}
D.~Barbasch, P. Pand\v zi\'c,
{\it  Dirac cohomology and unipotent representations of complex groups},
Noncommutative geometry and global analysis,
Contemp. Math. \textbf{546}, Amer. Math. Soc., Providence, RI, 2011, pp.~1--22.

\bibitem[BW23]{BW23} D.~Barbasch, K.D.~Wong, {\it Dirac series of complex $E_8$}, preprint 2023, arXiv:2305.03254.











\bibitem[DDH22]{DDH22}  L.-G.~Ding,  C.-P.~Dong, H.~He,
{\it Dirac series for $E_{6(-14)}$}, J. Algebra \textbf{590} (2022), pp.~168--201.




\bibitem[DDY20]{DDY20}  J.~Ding, C.-P.~Dong, L.~Yang,
{\it Dirac series for some real exceptional Lie groups}, J. Algebra \textbf{559} (2020), pp.~379--407.



\bibitem[DW20]{DW20}   C.-P.~Dong, K.D.~Wong, {\it Scattered representations of $SL(n,\bC)$}, Pacific J. Math. \textbf{309} (2020), pp.~289--312.

\bibitem[DW22]{DW22}   C.-P.~Dong, K.D.~Wong, {\it Scattered representations of complex classical groups}, Int. Math. Res. Not. IMRN, \textbf{14} (2022), pp.~10431--10457.












\bibitem[KV95]{KV95}
A.~Knapp, D.~Vogan,
{\it  Cohomological induction and unitary representations},
Princeton University Press, Princeton NJ, 1995.




\bibitem[D79]{D79} M.~Duflo, {\it R\'epr\'esentation unitaires irr\'eductibles des groupes simples complexes de rang deux}, Bull. Soc. Math. France \textbf{107} (1979), no. 1, pp.~55--96.

\bibitem[E95]{E95} K.~Eriksson, {\it The numbers game and Coxeter groups},
Discrete Math. \textbf{139} (1995), pp.~155--166.

\bibitem[PP24]{PP24}  P. Pand\v{z}i\'{c},  A. Prli\'{c}, {\it Sharpening the Dirac inequality}, Rad Hrvat. Akad. Znan. Umjet. Mat. Znan. \textbf{28} (2024), pp.~327–337.

\bibitem[Par72]{Par72} R.~Parthasarathy, {\it Dirac operators and the discrete
series}, Ann. of Math. \textbf{96} (1972), pp.~1--30.

\bibitem[Par80]{Par80} R.~Parthasarathy, {\it Criteria for the unitarizability of some highest weight modules},
Proc. Indian Acad. Sci. \textbf{89} (1) (1980), pp.~1--24.

\bibitem[PRV]{PRV} K.R.~Parthasarathy, R.~Ranga Rao, and
S.~Varadarajan, {\it Representations of complex semi-simple Lie
groups and Lie algebras}, Ann. of Math. \textbf{85} (1967),
pp.~383--429.


\bibitem[Pau17]{Pau17} A.~Paul, {\it Cohomological induction in Atlas}, slides of July 14, 2017,  available from {\rm http://www.liegroups.org/workshop2017/workshop/presentations/Paul2HO.pdf.}

%
\bibitem[SR99]{SR99}
  S.~Salamanca-Riba, {\it On the unitary dual of real reductive groups and the $A_{\frq}(\la)-$modules: The strongly regular case}, Duke Math. Journal, vol. 96, no. 3, (1999), pp.~521--546.

\bibitem[SV98]{SV98} S.~Salamanca-Riba, D.~Vogan, {\it On the classification of unitary representations of reductive Lie
groups}, Ann. of Math. \textbf{148} (3) (1998), pp.~1067--1133.


\bibitem[V84]{V84} D.~Vogan, {\it Unitarizability of certain series of representations}, Ann. of Math. \textbf{120} (1) (1984), 141--187.

\bibitem[V86]{V86}
D.~Vogan,
{\it  The unitary dual of $GL(n)$ over an archimedean field},
Invent. Math. \textbf{83} (1986), pp.~449--505.

\bibitem[V94]{V94}
D.~Vogan,
{\it The unitary dual of $G_2$}, 
Invent. Math. \textbf{116} (1994), no. 1--3, 677--791.


\bibitem[V97]{V97}
D.~Vogan,
{\it Dirac operator and unitary representations},
3 talks at MIT Lie groups seminar, Fall 1997.

\bibitem[V23]{V23}
D.~Vogan,
{\it How to compute the unitary dual}, talk given in Zhejiang University (2023), \texttt{https://math.mit.edu/$\sim$dav/zhejiang23HO.pdf}.


\bibitem[VZ84]{VZ84}
D.~Vogan, G.~Zuckerman,
{\it Unitary Representations with non-zero cohomology},
Compositio Math.
\textbf{53} (1984), pp.~51--90.


\bibitem[W24]{W24} K.D.~Wong, {\it On some conjectures of the unitary dual of $U(p,q)$},  Adv. Math. \textbf{442} (2024), Paper No. 109584, 38 pp.

\bibitem[WZ23]{WZ23} K.D.~Wong, H.~Zhang {\it The genuine unitary dual for complex spin groups},  preprint 2023,  arXiv:2303.10803.


\bibitem[Zh74]{Zh74} D.~P.~Zhelobenko, {\it Harmonic analysis on complex semisimple Lie
groups}, Mir, Moscow, 1974.


\end{thebibliography}
\end{document}